\documentclass[a4paper]{amsart}
\usepackage{amsmath,amstext,amssymb,amsfonts,amscd,amsthm}
\usepackage[numbers,sort&compress]{natbib}
\usepackage{mathdots,mathrsfs,enumerate}
\usepackage{extarrows}
\usepackage{mathrsfs}  
\usepackage{dsfont}  
\usepackage{graphicx,color}
\usepackage{tikz}
\usepackage{stfloats}
\usetikzlibrary{3d,calc,patterns,graphs,arrows}
\usepackage{tikz-cd,array,diagbox}
\usepackage{changepage}
\usepackage{geometry}
\usepackage[CJKbookmarks=true,unicode,colorlinks,linkcolor=blue,anchorcolor=blue,citecolor=blue]{hyperref}
\usepackage{subfigure}
\numberwithin{equation}{section}

\usepackage{xcolor}
\definecolor{BeanPasteGreen}{RGB}{200,232,200}
\definecolor{WaterBlue}{RGB}{185,220,237}

\newtheorem{lemma}{Lemma}[section]
\newtheorem{theorem}{Theorem}[section]

\newtheorem{remark}{Remark}[section]

\newenvironment{proof*}
\numberwithin{equation}{section}

\newcommand{\op}[1]{\operatorname{#1}} 

\newcommand{\p}{\partial}

\renewcommand{\t}{\triangle}

\newcommand{\ls}{\lesssim}

\newcommand{\sm}{\setminus}

\newcommand{\abs}[1]{\left\vert#1\right\vert}
\newcommand{\norm}[1]{\left\Vert#1\right\Vert}
\newcommand{\set}[1]{\left\{#1\right\}}

\newcommand{\frb}[1]{\left(#1\right)}

\newcommand{\ol}{\overline}

\newcommand{\wt}{\widetilde}

\newcommand{\Gra}{\Longrightarrow}

\newcommand{\Z}{\mathbb Z}

\newcommand{\R}{\mathbb R}

\renewcommand{\a}{\alpha}
\renewcommand{\b}{\beta}
\newcommand{\g}{\gamma}
\providecommand{\G}{}
\renewcommand{\G}{\Gamma}
\renewcommand{\d}{\delta}

\newcommand{\e}{\epsilon}
\newcommand{\ve}{\varepsilon}

\renewcommand{\th}{\theta}

\renewcommand{\k}{\kappa}

\renewcommand{\l}{\lambda}

\newcommand{\s}{\sigma}

\newcommand{\vp}{\varphi}

\newcommand{\om}{\omega}
\newcommand{\Om}{\Omega}

\allowdisplaybreaks[3]

\begin{document}

\title[Interior Hessian estimates for the quadratic Hessian equation]
{Interior Hessian estimates for the quadratic Hessian equation}

\author{Zhisu Li}
\address{School of Mathematics and Center for Nonlinear Studies,
Northwest University, Xi'an, 710127, People's Republic of China}
\email{lizhisu@nwu.edu.cn}

\author{Ke Wu}
\address{School of Mathematics and Center for Nonlinear Studies, 
Northwest University, Xi'an, 710127, People's Republic of China}
\email{wuke@med.nwu.edu.cn}

\date{September 6, 2026.}

\maketitle
\tableofcontents

\begin{abstract}
We establish interior Hessian estimates
for solutions of the quadratic Hessian equation in arbitrary dimensions.
\end{abstract}

Keywords:
sigma-$2$ equation,
interior Hessian estimates and regularity,
Liouville theorems

2020 Mathematics Subject Classification:
Primary:
35B45; 
Secondary:
35B65, 
35B53, 
35J60.  


\section{Introduction}

In this paper, we study the interior Hessian estimate for the quadratic Hessian equation
\begin{equation}\label{eqn.sigma-2}
\s_2(D^2u)=\sum_{1\leq i<j\leq n}\l_i\l_j=1
\end{equation}
in dimension $n\geq2$,
where $\l_1,\l_2,\dots,\l_n$ are the eigenvalues of the Hessian matrix $D^2u$ of the function $u$.
Building upon
the gradient estimate by Trudinger \cite{Tru97} and also by Chou--Wang \cite{CW01},
and the Pogorelov estimate by Chou--Wang \cite{CW01},
we prove the following result.

\medskip

\begin{theorem}\label{thm.smooth}
Let $n\geq2$.
Suppose that $u\in C^\infty(B_2)$ satisfies \eqref{eqn.sigma-2} in $B_2\subset\R^n$.
Then
\[
\norm{D^2u}_{L^\infty(B_{1/2})}
\leq C\frb{n,\norm{u}_{L^\infty(B_2)}}.
\]
\end{theorem}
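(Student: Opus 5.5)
We argue by compactness and contradiction, reducing the theorem to a Liouville-type rigidity statement combined with the known regularity theory for $\s_2$. Throughout we work with the $2$-convex branch of solutions, so $\l\in\G_2$. If $\l\in-\G_2$, we replace $u$ by $-u$.

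\emph{Step 1: gradient and strict convexity control.} By the gradient estimate of Trudinger \cite{Tru97} and Chou--Wang \cite{CW01}, $\norm{Du}_{L^\infty(B_{3/2})}\le C(n,\norm{u}_{L^\infty(B_2)})$. Hence the family of admissible solutions with $\norm{u}_{L^\infty(B_2)}\le M$ is precompact in $C^{0,1}(B_{3/2})$.

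\emph{Step 2: Pogorelov estimate on sections.} We use the Pogorelov estimate of Chou--Wang \cite{CW01}. If $w$ solves \eqref{eqn.sigma-2} in a domain $\Om$ with $w=0$ on $\p\Om$, then $(-w)^{1+\ve}\abs{D^2w}\le C(n,\ve,\norm{w}_{C^1(\Om)})$ in $\Om$. To use it, we need a domain on which a modification of $u$ vanishes on the boundary. We take $w=u-\ell-\d$, where $\ell$ is an affine function, and work on a set $\Om=\{w<0\}$ compactly contained in $B_{3/2}$. For $\s_2$, $u$ need not be convex, so $\Om$ need not be a convex section. The Pogorelov estimate of \cite{CW01} applies to $k$-admissible solutions on general domains, and we will check that this is enough. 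The remaining task is to produce such an $\Om$ that contains $B_{1/2}$ with a quantitative gap, that is, $w\le -c<0$ on $B_{1/2}$.

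\emph{Step 3: the contradiction argument.} Suppose the theorem fails. Then there are solutions $u_k$ with $\norm{u_k}_{L^\infty(B_2)}\le M$ and $\abs{D^2u_k(x_k)}\to\infty$ for some $x_k\in \ol{B_{1/2}}$. By Step 1, a subsequence converges locally uniformly to a viscosity solution $u_\infty$ of \eqref{eqn.sigma-2}. A direct choice $w=u-u(0)-Du(0)\cdot x-\d$ fails for $\s_2$ solutions: the graph may contain a line segment, as in Pogorelov-type singular solutions, which degenerate when $n\ge3$. The key point is therefore to show that $u_\infty$ is strictly $2$-convex in the sense needed to build the sublevel sets of Step 2. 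Strict convexity itself is not available for $\s_2$, so we instead use a quadratic barrier: set $w=u-\tfrac{A}{2}\abs{x}^2+\text{const}$. For suitable $A$, depending only on the $C^1$ bound, one would like $\{w<0\}$ to be a bounded set containing $B_{1/2}$. This requires one-sided control of $u$ from below by a quadratic. Such control follows since $\Delta u\ge \sqrt{2n/(n-1)}>0$, so $u$ is subharmonic with a quantitative excess, while the lower bound comes from the $L^\infty$ bound.

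\emph{Main obstacle.} The hard step is verifying that the Chou--Wang Pogorelov estimate applies to $w=u-q$ with $q$ quadratic. Subtracting $q$ changes the equation to $\s_2(D^2w+AI)=1$, which has a different structure. The plan is to redo the Pogorelov computation for the shifted operator, using the test function $(-w)^{\b}\,\l_{\max}(D^2u)\,e^{\frac{\a}{2}\abs{Du}^2}$. Its maximum lies in the interior of $\{w<0\}$, and there we apply the concavity of $\s_2^{1/2}$ on $\G_2$ together with the standard third-derivative inequality for $\s_2$. Given the interior bound on $\{w<-c\}\supset B_{1/2}$, we obtain $\abs{D^2u}\le C$ there, contradicting the blow-up of $\abs{D^2u_k(x_k)}$. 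This closes the argument.
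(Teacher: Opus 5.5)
Your proposal has a genuine gap in Step~3 and in what you call the main obstacle. The gap is the whole difficulty of the theorem, not a technicality. Write your domain as $\Om=\{u<q\}$ with $q=\frac A2|x|^2-c$, and consider the two signs of $A$.

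\emph{Case $A>0$ (your sign).} Here $q$ is convex, hence $2$-convex, so Lemma~\ref{lem.Pogorelov-estimate} applies as it stands and there is nothing to redo. What fails is the geometry. For $n\ge3$ the smooth solution $u=\frac12(x_1^2+x_2^2)$ vanishes on the $x_3$-axis. Take any convex $C^1$ function $q$ with $q(0)>0=u(0)$ (affine $q$ included), and let $e_3$ be the third coordinate vector. Then $q(te_3)\ge q(0)+t\,\p_3q(0)>0=u(te_3)$ on a half-line of $t$'s, so the component of $\{u<q\}$ containing $B_{1/2}$ reaches $\p B_{3/2}$. Compact containment needs quantitative strict $2$-convexity of $u$. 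The bound $\t u\ge\sqrt{2n/(n-1)}$ cannot supply it: the mean-value inequality only makes $u$ exceed $u(x_0)+cr^2$ at \emph{some} point of each sphere $\p B_r(x_0)$.

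\emph{Case $A<0$.} The $L^\infty$ bound alone makes $\Om$ a good domain, but now $q$ is concave and the Pogorelov computation breaks. The term $\b L_u(q-u)/(q-u)=-\b\frb{2+|A|(n-1)\t u}/(q-u)$ is of order $\t u/(q-u)$. The good term $\a\sum_if_i\l_i^2$ produced by $e^{\a|Du|^2/2}$ is, for $\l=(\l_1,\l_1^{-1},0,\dots,0)$, only $\a\t u$, so it cannot absorb the bad term as $q-u\to0$. No refinement can rescue this. The ingredients you list (concavity of $\s_k^{1/k}$, the third-derivative inequality, the test function) are available for every $\s_k$. The same argument would therefore give interior $C^2$ bounds depending only on $\norm{u}_{L^\infty}$ for the Monge--Amp\`ere equation in dimensions $n\ge3$ and for $\s_k$ with $k\ge3$. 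The singular solutions of Pogorelov and Urbas rule that out. The compactness framing is also idle: the limit $u_\infty$ is never used.

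What is missing is a $2$-convex comparison function built from $u$ itself, with a quantitative lower bound for the gap on $B_{1/2}$. The paper takes $w=v-\d$, where $v$ is the $2$-convex solution of $\s_2(D^2v)=\frac14$ in $B_{3/2}$ with $v=u$ on $\p B_{3/2}$. Then $v\ge u$ and $L_v(v-u)\le-\frac12$. The boundary decay $v-u\le Cd_x^{1/2}$ of Lemma~\ref{lem.boundary-decay} keeps $\{v-u>\d\}$ away from $\p B_{3/2}$. So Lemma~\ref{lem.Pogorelov-estimate} applies once $v-u\ge2\d$ on $B_{1/2}$.

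That lower bound is the heart of the proof, in two steps:
\begin{enumerate}
\item[(i)] a seed separation on a small ball (Lemma~\ref{lem.seed}), obtained by integrating the exact identity $A_{ij}\bigl(D^2\tfrac{u+v}{2}\bigr)(v-u)_{ij}=-\tfrac34$ against a test function, using the divergence-free structure of the coefficients;
\item[(ii)] propagation of this separation along a chain of balls via the barrier $\psi_y$ for $L_v$ (Lemmas~\ref{lem.cap-inequality}--\ref{lem.separation}).
\end{enumerate}
Your proposal has no counterpart to this separation step. It is exactly where the structure specific to $k=2$ enters: the quadratic polarization identity and Lemma~\ref{lem.spectral}.
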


\begin{remark}
The interior regularity of viscosity solutions to \eqref{eqn.sigma-2}
on the positive branch $\t u>0$ in arbitrary dimensions
follows directly by combining Theorem \ref{thm.smooth}
with smooth Dirichlet approximation and standard compactness arguments.
See \cite[page 2475]{M21} and \cite[Remark 5.1]{SY25} for related details.
\end{remark}

\begin{remark}[Liouville theorem]
Theorem \ref{thm.smooth} implies that
every entire solution to \eqref{eqn.sigma-2}
with quadratic growth is a quadratic polynomial.
\end{remark}

\begin{remark}
For $n\geq2$, the corresponding result for the Hessian quotient equation
$\frac{\s_2(D^2u)}{\s_1(D^2u)}=1$ in $B_2\subset\R^n$ with $\t u>0$
follows directly from Theorem \ref{thm.smooth} as well.
Indeed, setting $v:=u-\frac{1}{2(n-1)}|x|^2$,
we have $\s_2(D^2v)=\frac{n}{2(n-1)}$;
see \cite{LS26}.
\end{remark}

\begin{remark}
The argument in the proof of Theorem \ref{thm.smooth} can also be
adapted to smooth solutions of $\s_2(D^2u)=f(x,u,Du)>0$ in $B_2$
with $f\in C^{1,1}(B_2\times\R\times\R^n)$.
Such solutions admit an interior Hessian estimate
depending only on $n$, $\norm{u}_{C^1(B_2)}$, $\norm{f}_{C^{1,1}}$, and
$\norm{1/f}_{L^\infty}$.
Moreover, if $f$ is independent of $Du$,
it suffices to assume that $f\in C^{0,1}(B_2\times\R)$,
and the interior Hessian estimate depends only on
$n$, $\norm{u}_{L^\infty(B_2)}$, $\norm{f}_{C^{0,1}}$,
and $\norm{1/f}_{L^\infty}$.
See Remark \ref{rmk.sigma-2=f} for details.
After the first version of this paper appeared,
Chen--Zhou--Zhu \cite{CZZ26}
released a work which applies the separation-propagation method developed here
to the case of positive $C^\alpha$ right-hand sides.
\end{remark}

\begin{remark}
Very recently,
Qiu--Yan \cite{QY26}
released a work which applies 
the separation-propagation method developed here
to the graphical scalar curvature equation $\s_2(\k)=1$.
We note that they have given a careful
and thorough discussion of this method.
\end{remark}

\medskip

The interior Hessian estimate for the quadratic Hessian equation \eqref{eqn.sigma-2} has a long history.

In dimension two, \eqref{eqn.sigma-2} is the Monge--Amp\`ere equation.
The classical interior estimate goes back to Heinz \cite{H59} via isothermal coordinates;
alternative proofs were later given by Chen--Han--Ou \cite{CHO16} using the maximum principle
and by Liu \cite{Liu21} using the partial Legendre transform.

In dimension three,
Warren--Yuan \cite{WY09}
exploited the minimal surface structure and a full strength Jacobi inequality
to obtain the interior Hessian estimate.
Qiu \cite{Q24} observed that the gradient graph $(x,Du)$ has bounded mean curvature
in $\frb{\R^3\times\R^3,fdx^2+dy^2}$
and combined this structure
with a Jacobi inequality to obtain an interior Hessian estimate
for positive right-hand sides $f\in C^{1,1}$.

In dimension four,
a major advance was made by Shankar--Yuan \cite{SY25}.
They resolved the unrestricted interior Hessian estimate problem,
gave a new proof of the three-dimensional result,
and obtained higher-dimensional estimates
under the dynamic semiconvexity condition
\begin{equation}\label{eqn.lambda-max-geq}
\l_{\min}(D^2u)\geq-c(n)\t u.
\end{equation}
Their proof synthesized Qiu's doubling idea \cite{Q24},
the Alexandrov-type differentiability approach of \cite{CT05},
and Savin's small perturbation theorem \cite{S07}.
Fan \cite{Fan26} extended the Shankar--Yuan estimates
to positive $C^{1,1}$ variable right-hand sides.

In arbitrary dimensions,
Guan--Qiu \cite{GQ19} proved a pointwise Hessian estimate
under the structural condition $\s_3(D^2u)\geq-A$,
which in particular covers convex solutions.
McGonagle--Song--Yuan \cite{MSY19} obtained the Hessian estimate
under an almost convexity condition by a compactness argument,
while Shankar--Yuan \cite{SY20} extended the result to semiconvex smooth solutions
by an integral method.
Using an improved regularity property of the equation
satisfied by the Legendre--Lewy transform,
Shankar--Yuan \cite{SY21} subsequently obtained
interior regularity for almost convex viscosity solutions.
Recently,
Chen--Jian--Tu--Zhou \cite{CJTZ26} and Zhou--Zhu \cite{ZZ26}
established interior regularity for convex viscosity solutions of $\s_2(D^2u)=f(x)>0$,
obtaining $C^2$ regularity for $f\in C^{0,1}$
and $C^{2,\a}$ regularity for $f\in C^\a$, respectively.

A different line of argument,
based on the foundational Dirichlet solvability theory
of Caffarelli--Nirenberg--Spruck \cite{CNS85}
and the pioneering Pogorelov estimate of Chou--Wang \cite{CW01},
was developed by Mooney.
In \cite{M21},
he proved strict $2$-convexity for convex viscosity subsolutions of $\s_2(D^2u)\geq1$,
and then obtained interior regularity for convex viscosity solutions.
Recently,
this strict $2$-convexity approach was also used in \cite{CJTZ26}
to construct a $2$-convex barrier for their approximation scheme.
In \cite{M25},
Mooney obtained an interior $C^2$ estimate
in terms of the $W^{2,p}$ norm for any $p>2$,
and ruled out several possible types of Hessian blow-up.
He also emphasized in \cite[page 2]{M25} the potential of the Chou--Wang estimate
for the regularity theory of the $2$-Hessian equation
by understanding ``the correct notion of strict $2$-convexity''.
In addition, 
\cite[Remark 2.6]{M25} observed 
from the proof of the Chou--Wang estimate in \cite{CW01}
that it suffices to assume $L_uw\geq0$, 
instead of requiring $w$ to be $2$-convex.
In the first arXiv version of this paper,
we used the corresponding formulation of the Chou--Wang estimate
given in \cite[Theorem 2.5]{M25}.

\medskip

Liouville-type rigidity for the $\s_2$ equation has also been extensively studied.
Warren--Yuan \cite{WY09} proved that entire solutions with quadratic growth
in dimension three are quadratic polynomials.
This was extended by Chang--Yuan \cite{CY10} under an almost convexity assumption
and Shankar--Yuan \cite{SY22} later extended it to general semiconvex entire solutions.
Shankar--Yuan \cite{SY25}
further obtained rigidity for both branches in dimension four,
and in higher dimensions
under condition \eqref{eqn.lambda-max-geq} on the positive branch
or the symmetric one $\l_{\max}(D^2u)\leq-c(n)\t u$ on the negative branch.
In contrast, Warren \cite{W16} constructed nonpolynomial entire saddle solutions for the $\s_2$ equation.
Other related rigidity results can be found in \cite{BCGJ03,CX19,Y02,LS26,MY26,LW26,JL26}
and the references therein.

\medskip

The Jacobi inequality plays a central role in essentially all previous works
on Hessian estimates without convexity assumptions.
In dimensions $n\geq 5$, however, this approach reaches a fundamental obstacle:
as observed in \cite{SY25}, even the subharmonicity of $\log\t u$
---the very basis of the Jacobi inequality---
fails in the absence of an a priori conditions on the Hessian.
This technical barrier has fostered a prevailing view that
``singular viscosity solutions'' may exist for $\sigma_2(D^2u)=1$,
and that the optimal regularity in higher dimensions may be only partial.
See \cite[page 492]{SY25} and \cite[page 2]{M25}.

Contrary to this view,
Theorems \ref{thm.smooth} establish full interior $C^2$ regularity
without any convexity or dimension restriction.
The unrestricted positive branch in dimensions $n\geq 5$,
which remained open despite the four-dimensional breakthrough of Shankar--Yuan \cite{SY25},
is therefore settled affirmatively.

\medskip

Our proof does not rely on the Jacobi inequality or the Hessian doubling argument.
Instead, we seek a suitable comparison function $w$
and a comparison domain $\Om$ with $B_{1/2}\subset\Om\Subset B_{3/2}$
for which the Pogorelov estimate of Chou--Wang \cite{CW01}
\[
\sup_\Om (w-u)^4|D^2u|
\leq C\frb{n,\norm{Du}_{L^\infty(\Om)},\norm{Dw}_{L^\infty(\Om)}}
\]
can be invoked.
Our starting point is to compare $u$ with the $2$-convex solution $v$ of the Dirichlet problem
\[
\s_2(D^2v)=1/4\ \text{in}\ B_{3/2},
\quad
v=u\ \text{on}\ \p B_{3/2}.
\]
The existence of such a smooth function $v$ is guaranteed by \cite{CNS85}.
This choice makes $v$ a super-solution of the equation for $u$,
so the comparison principle gives $v\geq u$ in $\ol{B_{3/2}}$.
Moreover, the concavity of $\s_2^{1/2}$ implies
\[
L_v(v-u)
\leq\s_2^{1/2}(D^2v)-\s_2^{1/2}(D^2u)
=-1/2<0,
\]
where $L_v:=(\s_2)_{ij}(D^2v)\p_{ij}$ denotes the linearized operator of $\s_2$ at $v$.
The local maximum principle therefore gives
\[
v-u>0\quad\text{in}\ B_{3/2}.
\]
This suggests seeking a uniform lower bound $v-u\geq2\d$ in $B_{1/2}$,
then shifting $v$ downward to $w:=v-\d$
and taking the component of $\{w-u>0\}$ containing $B_{1/2}$
as the comparison domain $\Om$.
To achieve this,
we develop a quantitative separation-propagation method.
For the initial separation,
the key observation is that $v-u$ satisfies the exact linear equation
\[
L_{(v+u)/2}(v-u)
=\s_2(D^2v)-\s_2(D^2u)
=-\frac34,
\]
which follows from the quadratic structure of $\s_2$.
The divergence-free structure of the coefficients of $L_{(v+u)/2}$
allows us to integrate by parts
and obtain the reverse Chou--Wang estimate
\[
1\ls\int_{B_1}(v-u)\t(v+u).
\]
With the interior gradient estimate,
the localized form of this estimate yields the seed separation
\[
v-u\geq\ve_0>0
\quad\text{on a small ball contained in}\ B_{1/2}.
\]
To propagate this separation, we construct a barrier for $L_v$.
For each $y\in B_{1/2}$, introduce
\[
\vp_y(x)
:=2\frb{(x-y)\cdot Dv(x)-v(x)+v(y)}
+\frac{\a}{2}|x-y|^2
-2\b|Dv(x)|^2
\]
and the associated exponential auxiliary function
\[
\psi_y:=e^{(c_y-\vp_y)/\g}-1.
\]
For some small $r>0$,
the parameters are chosen so that 
\[
L_v\psi_y>0
\quad\text{in an annular domain}.
\]
In establishing this positivity,
the term $-2\b|Dv|^2$ contributes a positive quadratic term in the eigenvalues,
which handles the case where a negative eigenvalue makes
the two terms in $(\vp_y)_i$ nearly offset.
Recalling $L_v(v-u)<0$, and using $\psi_y$ as a barrier function,
we obtain the main separation-propagation estimate
\begin{equation}\label{eqn.propagation-of-separation}
v-u\geq\e\ \text{in}\ B_r(y)
\ \Gra\
v-u\geq\th\e\ \text{in}\ B_{2r}(y)
\end{equation}
where $\th\in(0,1)$ is a uniform constant.
Iterating along a finite chain of intersecting balls gives
\[
v-u\geq2\d
\quad\text{in}\ B_{1/2}
\]
for a uniform $\d>0$.
Now, to complete the construction of the comparison domain,
we establish a boundary decay estimate,
which shows that $\{w>u\}$ stays a uniform distance away from $\p B_{3/2}$.
Hence, its component containing $B_{1/2}$ gives the desired domain $\Om$,
and thus
\[
w>u\ \text{in}\ \Om,
\qquad
w=u\ \text{on}\ \p\Om,
\qquad
w-u\geq\d\ \text{in}\ B_{1/2}.
\]
The Chou--Wang estimate then yields the desired Hessian bound.
Note that only $L_v$ and $L_{(u+v)/2}$ are used explicitly in the construction,
while $L_u$ appears only through the Chou--Wang estimate.

\medskip

The construction of $\vp_y$ is inspired by and modified from the radial-derivative test functions
of Shankar \cite[page 346]{S26}, Guan--Qiu \cite[page 8]{GQ19} and Qiu \cite[page 584]{Q24},
while that of $\psi_y$ draws on the Korevaar-type exponential cutoffs
employed in Korevaar \cite[page 415]{K87} and Shankar \cite[page 346]{S26}.
In proofs of interior Hessian estimates,
such auxiliary functions are often combined with Jacobi inequalities
to establish a doubling inequality.
For example, in dimension four,
Shankar--Yuan \cite[Propositions 2.1 and 3.1]{SY25}
establish an ``almost'' Jacobi inequality for $\ln\t u$
and then combine it with a suitable test function to obtain
\[
\sup_{B_2}\t u
\ls\sup_{B_1}\t u.
\]
Our separation-propagation argument can be carried out in a similar way.
To see this, a key observation is that
$L_v(v-u)\leq-1/2$ directly gives the following concavity Jacobi inequality
\[
L_va
\geq\frac{2}{a}A_{ij}(D^2v)a_i a_j+\frac12a^2
\quad
\text{for}\ a:=(v-u)^{-1}.
\]
Combining this with the barrier $\psi_y$ and the maximum principle implies the ``doubling inequality''
\[
\sup_{B_{2r}(y)}a\ls\sup_{B_r(y)}a.
\]
which is equivalent to the propagation of the separation \eqref{eqn.propagation-of-separation}.

\section{Preliminaries}

In this section,
we fix notation and recall several standard facts
about the quadratic Hessian equation
that will be used throughout the paper.

For $\l=(\l_1,\l_2,\dots,\l_n)\in\R^n$, set
\[
\s_1(\l):=\sum_{i=1}^n\l_i,
\quad
\s_2(\l):=\sum_{1\leq i<j\leq n}\l_i\l_j,
\quad
\G_2:=\set{\l\in\R^n:\s_1(\l)>0,\ \s_2(\l)>0}.
\]
The operator $\s_2$ is elliptic on $\G_2$,
and $\s_2^{1/2}$ is concave there.
For any $n\times n$ real symmetric matrix $M$ with its eigenvalue-vector $\l(M)\in\G_2$,
we write $\s_2(M):=\s_2(\l(M))$ for short, and then define
\[
A_{ij}(M)
:=\frac{\p\s_2(M)}{\p M_{ij}}
=\op{tr}M\d_{ij}-M_{ij},
\quad
A(M):=(A_{ij}(M)).
\]
Then $A(M)$ is positive definite.
We say $u\in C^2(\Om)$ is $2$-convex if $\l(D^2u(x))\in\G_2$ for any $x\in\Om$.
For a smooth $2$-convex solution $u$ of
\eqref{eqn.sigma-2}, we call
\[
L_u:=A_{ij}(D^2u)\p_{ij}
\]
the linearized operator of $\s_2$ at $u$.
Note that $L_uu=A_{ij}(D^2u)u_{ij}=2\s_2(D^2u)$.

\medskip

Now, we introduce some basic algebraic inequalities of $\s_2$.

\begin{lemma}\label{lem.spectral}
Let $\l=(\l_1,\l_2,\dots,\l_n)\in\G_2$ satisfy $\s_2(\l)=1$.
Write $f(\l):=\s_2(\l)$,
and $f_i:=\p f/\p\l_i$ for $\l_1\geq\l_2\geq\cdots\geq\l_n$.
Then
\begin{gather*}
f_1(\l)\geq\frac{1}{\s_1(\l)},
\quad
f_i\geq\frb{1-\frac{1}{\sqrt2}}\s_1(\l)\
\text{for all}\ i\in\Z_{[2,n]},
\quad
f_1\l_1^2\geq\frac{\s_1(\l)}{n^2}.
\end{gather*}
\end{lemma}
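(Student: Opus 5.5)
The plan is to use the basic identities $f_i=\s_1(\l)-\l_i$, $\sum_i\l_i f_i=2\s_2(\l)=2$ and $\sum_i f_i=(n-1)\s_1$, together with
\[
\s_1^2=|\l|^2+2\s_2=|\l|^2+2 .
\]
Throughout write $\s_1:=\s_1(\l)>0$.

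For the first inequality, note that $f_1=\s_1-\l_1=\sum_{j\geq2}\l_j$. The key relation is
\[
1=\s_2(\l)=\l_1 f_1+\s_2(\l_2,\dots,\l_n).
\]
I would express this through $\s_2(\l')$ with $\l'=(\l_2,\dots,\l_n)$, using $2\s_2(\l')=f_1^2-|\l'|^2\leq f_1^2$. This gives
\[
1\leq \l_1f_1+\tfrac12 f_1^2=f_1\frb{\l_1+\tfrac12 f_1}\leq f_1\frb{\l_1+f_1}=f_1\s_1 ,
\]
where the last step uses $f_1>0$, which holds since $\l\in\G_2$ (a standard fact). Hence $f_1\geq 1/\s_1$.

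For the second inequality, since $f_i\geq f_2$ for $i\geq2$, it suffices to show $\l_2\leq \s_1/\sqrt2$. If $\l_2\leq0$ this is trivial. Otherwise $\l_1\geq\l_2>0$, so $|\l|^2\geq 2\l_2^2$, i.e.
\[
\l_2^2\leq \tfrac12|\l|^2=\tfrac12(\s_1^2-2)<\tfrac12\s_1^2 .
\]
Therefore $f_2=\s_1-\l_2\geq(1-1/\sqrt2)\s_1$.

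For the third inequality, the plan is to first show $\l_1\geq\s_1/n$, which holds since $\l_1$ is the largest eigenvalue. Then $f_1\l_1^2\geq \l_1/\s_1\cdot\l_1\cdot$ and I would proceed by cases.
\begin{itemize}
\item[(a)] Using the first inequality, $f_1\l_1^2\geq \l_1^2/\s_1$. This is at least $\s_1/n^2$ whenever $\l_1\geq\s_1/n$, which always holds. So
\[
f_1\l_1^2\geq \frac{\l_1^2}{\s_1}\geq\frac{\s_1^2/n^2}{\s_1}=\frac{\s_1}{n^2}
\]
directly.
\end{itemize}
The single case therefore suffices, and this step is immediate.

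The main obstacle is essentially only verifying $f_1>0$, i.e. $\s_1-\l_{\max}>0$ on $\G_2$, which I would cite as standard: if $\sum_{j\geq2}\l_j\leq0$, then $\s_2=\l_1f_1+\s_2(\l')\leq \l_1f_1+\tfrac12f_1^2$. When $f_1\leq0$ and $\l_1+\tfrac12 f_1\geq \tfrac12\s_1>0$, this forces $\s_2\leq0$, a contradiction.
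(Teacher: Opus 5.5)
Your proof is correct and follows essentially the same route as the paper. Your identity $1=\l_1f_1+\s_2(\l_2,\dots,\l_n)$ with $2\s_2(\l_2,\dots,\l_n)\leq f_1^2$ is the same relation $f_1(\s_1+\l_1)=2+\sum_{i\geq2}\l_i^2$ the paper uses, and the second and third inequalities are argued identically (you merely reduce to $i=2$ via $f_i\geq f_2$). One remark: the step $\l_1f_1+\tfrac12f_1^2\leq f_1\s_1$ holds without knowing $f_1>0$, so your argument already gives $f_1\geq1/\s_1>0$, and the final ``main obstacle'' paragraph is unnecessary.
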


\begin{remark}
The first two inequalities are contained in
\cite[Corollary 2.1]{SY25},
see also \cite[page 321, (16)]{LT94} for their sharper lower bounds
when $i\geq2$.
\end{remark}

\begin{proof}
Since $\l\in\G_2$ and $\s_2(\l)=1$, we have $\s_1(\l)^2=2+|\l|^2$
and $f_i=\s_1(\l)-\l_i>0$.
In particular,
\[
f_1
=\s_1(\l)-\l_1
=\frac{2+\sum_{i=2}^n\l_i^2}{\s_1(\l)+\l_1}
>\frac{1}{\s_1(\l)}.
\]
Moreover, since $\l_1\geq\s_1(\l)/n$,
one have $f_1\l_1^2\geq\s_1(\l)/n^2$.
For $i\geq2$, the desired estimate is immediate if $\l_i\leq0$.
If $\l_i>0$, then $2\l_i^2\leq i\l_i^2\leq\sum_{j=1}^i\l_j^2<\s_1(\l)^2$,
and hence
\[
f_i=\s_1(\l)-\l_i
>\frb{1-\frac1{\sqrt2}}\s_1(\l),
\quad\text{for all}\ i\in\Z_{[2,n]}.
\]
\end{proof}

Finally, we collect some classical results:
for the $\s_2$-equation:
the solvability of the Dirichlet problem \cite{CNS85},
the interior gradient estimate \cite{Tru97,CW01},
and the Pogorelov-type Hessian estimate \cite{CW01}.

\begin{lemma}[Classical solvability,
\texorpdfstring{\cite[Theorem 3]{CNS85}}{}]
\label{lem.solvability}
Let $r>0$, $c>0$ and $g\in C^\infty(\p B_r)$.
Then there exists a unique $2$-convex solution $u\in C^\infty(\ol{B_r})$ to the Dirichlet problem
\[
\s_2(D^2u)=c\ \textrm{in}\ B_r,
\quad
u=g\ \textrm{on}\ \p B_r.
\]
\end{lemma}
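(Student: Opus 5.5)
This lemma is the classical result of Caffarelli--Nirenberg--Spruck \cite[Theorem 3]{CNS85}, so in the paper it is simply cited. I would nevertheless check that the ball-and-constant case fits their framework and recall how their argument goes.

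\emph{Reduction and uniqueness.} By scaling $u(x)\mapsto r^{-2}c^{-1/2}u(rx)$, it suffices to treat $r=1$ and $c=1$. Uniqueness among $2$-convex solutions comes from the comparison principle. If $u_1,u_2$ are two such solutions, then
\[
0=\s_2(D^2u_1)-\s_2(D^2u_2)=A_{ij}\bigl(\tfrac12(D^2u_1+D^2u_2)\bigr)\p_{ij}(u_1-u_2).
\]
Because $\G_2$ is convex, the averaged Hessian also has eigenvalues in $\G_2$, so the operator on the right is elliptic. The maximum principle then gives $u_1=u_2$.

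\emph{Existence by continuity.} I would join the target problem to a trivial one, for instance $\s_2(D^2u)=(1-t)\s_2(D^2\underline u)+t$ with boundary data $g$ along $t\in[0,1]$. Here $\underline u$ is a strictly $2$-convex subsolution equal to $g$ on $\p B_1$, which we can take as $A(|x|^2-1)+\tilde g$ with $\tilde g$ a harmonic (or any smooth) extension of $g$ and $A$ large. Openness follows from the implicit function theorem, since the linearized operator is invertible on $2$-convex functions. Closedness requires a priori $C^{2,\alpha}(\ol{B_1})$ bounds that are uniform in $t$. These rest on four ingredients:
\begin{enumerate}[(i)]
\item a $C^0$ bound from comparison with $\underline u$ and with the harmonic extension of $g$;
\item a gradient bound, obtained by reducing to the boundary and using the same barriers;
\item global second-derivative bounds, obtained by reducing to the boundary through the concavity of $\s_2^{1/2}$;
\item Evans--Krylov theory together with Schauder estimates.
\end{enumerate}
The case $n=1$ needs no separate treatment, since the paper takes $n\ge2$.

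\emph{Main obstacle.} The hard step is the boundary second-derivative estimate, and in particular the double normal derivative $u_{\nu\nu}$ on $\p B_1$. This is exactly where CNS use the geometry of the domain: the uniform convexity of the sphere gives a strictly $(n-1)$-convex boundary, which keeps the tangential eigenvalues uniformly inside $\G_{1}$ for the $(n-1)$-dimensional problem. Combined with their barrier construction for the mixed derivatives $u_{\tau\nu}$, this gives a bound on $u_{\nu\nu}$ from above. For the ball and constant right-hand side, all hypotheses of \cite[Theorem 3]{CNS85} hold, so the lemma follows directly by citing it.
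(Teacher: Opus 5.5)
Your proposal is correct and matches the paper, which gives no proof of its own and simply invokes \cite[Theorem 3]{CNS85} for the ball. For $\s_2$ the CNS boundary hypothesis is only strict mean convexity, $\kappa\in\G_1$, which the sphere trivially satisfies, so full convexity of the sphere is more than you need. Your scaling reduction, the uniqueness argument via the polarization identity and the convexity of $\G_2$, and the continuity-method outline are accurate background but are not needed beyond the citation.
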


\begin{lemma}[Gradient estimate,
\texorpdfstring{\cite[Theorem 3.1]{Tru97}}{},
\texorpdfstring{\cite[Theorem 3.2]{CW01}}{}]
\label{lem.gradient-estimate}
Let $r>0$ and $c>0$.
Suppose $u\in C^\infty(B_r)$ is a $2$-convex solution of $\s_2(D^2u)=c$ in $B_r$.
Then
\[
\norm{Du}_{L^\infty(B_{r/2})}
\leq\frac{C(n)}{r}\norm{u}_{L^\infty(B_r)}.
\]
\end{lemma}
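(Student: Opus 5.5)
By the scaling $x\mapsto rx$ we may assume $r=1$. By the homogeneity of $\s_2$ we may also replace $u$ by $u/M$, $M:=\norm{u}_{L^\infty(B_1)}$, which solves $\s_2(D^2(u/M))=c/M^2$. So it suffices to prove $|Du|\le C(n)$ on $B_{1/2}$ when $|u|\le1$ in $B_1$, $\s_2(D^2u)=c$ is an arbitrary positive constant, and $u$ is $2$-convex. The whole argument must therefore use only the ellipticity of $L_u=A_{ij}(D^2u)\p_{ij}$ and never the size of $c$. This is the approach of Chou--Wang: a maximum principle for a Korevaar-type auxiliary function.

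\textbf{Auxiliary function and first-order condition.} I would set
\[
G:=\eta(x)\,h(u)\,|Du|^2,\qquad \eta:=(1-|x|^2)^2,\qquad h(u):=(3-u)^{-\b}\ \text{or}\ e^{\b u},
\]
with $\b=\b(n)$ to be chosen. Let $G$ attain its maximum over $B_1$ at $x_0$. Rotate so that $Du(x_0)=|Du|e_1$. If $|Du|(x_0)$ is already bounded, we are done, so assume it is large. The first-order condition reads
\[
\frac{2u_ku_{ki}}{|Du|^2}+\frac{h'}{h}u_i+\frac{\eta_i}{\eta}=0 .
\]
In particular $u_{11}=-\tfrac12\frac{h'}{h}|Du|^2-\tfrac12\frac{\eta_1}{\eta}$ and $u_{1i}=-\tfrac12\frac{\eta_i}{\eta}$ for $i\ge2$.

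\textbf{Second-order inequality.} At $x_0$ we have $0\ge L_u\log G$. Differentiating the equation gives $L_uu_k=0$, and we also have $L_uu=2c\ge0$ and $\op{tr}A=(n-1)\s_1>0$. Expanding,
\[
0\ge \frac{2A_{ij}u_{ki}u_{kj}}{|Du|^2}-\frac{4A_{ij}u_ku_{ki}u_lu_{lj}}{|Du|^4}+\Big(\frac{h''}{h}-\frac{h'^2}{h^2}\Big)A_{11}|Du|^2+\frac{h'}{h}2c+\frac{L_u\eta}{\eta}-\frac{A_{ij}\eta_i\eta_j}{\eta^2}.
\]
Choosing $h$ with $h''h-h'^2\ge \k h'^2$ and $h'\ge0$ makes the third term $\ge \k\frac{h'^2}{h^2}A_{11}|Du|^2$, and the $c$-term can be dropped. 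The first two terms are handled with the first-order condition, which expresses $u_{1i}$ through $\eta_i/\eta$ and $h'/h$. The $\eta$ terms are bounded below by $-C\op{tr}A/\eta-CA_{11}|Du|^{-2}\eta^{-2}$-type quantities using $|D\eta|^2\le C\eta$.

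\textbf{Main obstacle: controlling the $A_{11}$ term.} The good term is proportional to $A_{11}=\s_1-u_{11}$, while the bad terms are proportional to $\op{tr}A\sim\s_1$. I would split into two cases.

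In the first case, $u_{11}\le (1-\th)\s_1$ for a small $\th(n)$. Then $A_{11}\ge\th\s_1$, and the term $\k\frac{h'^2}{h^2}\th\s_1|Du|^2$ dominates $C\s_1/\eta$. This gives $\eta|Du|^2\le C$ at $x_0$.

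In the second case, $u_{11}>(1-\th)\s_1$. The first-order condition gives $u_{11}\approx-\tfrac12\frac{h'}{h}|Du|^2<0$ up to an $\eta$ error. So once $\eta|Du|^2$ is large, $u_{11}$ is very negative, which contradicts $u_{11}>(1-\th)\s_1>0$. This sign information is exactly why $h$ is increasing.

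I expect the delicate point to be making the $\eta$-error terms compatible in both cases. The fix I would try first is to multiply $G$ by a power $\eta^{2}$ or $\eta^{4}$, absorbing $A_{ij}\eta_i\eta_j/\eta^2$ via $|D\eta|^2\le C\eta^{3/2}$. Either case then yields $\eta(x_0)h|Du|^2(x_0)\le C(n)$, hence $\sup_{B_{1/2}}|Du|\le C(n)$, and undoing the scalings gives the stated bound.
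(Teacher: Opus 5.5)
The paper gives no proof of its own here and only cites Trudinger and Chou--Wang; your Korevaar-type maximum-principle argument is essentially the Chou--Wang proof specialized to $\s_2$, and it is correct, because once $\eta|Du|^2$ is large the first-order condition forces $u_{11}<0$ and hence $A_{11}=\s_1-u_{11}>\s_1=\op{tr}A/(n-1)$. A few details need fixing:
\begin{itemize}
\item The first two terms cost $-\frac{h'^2}{h^2}A_{11}|Du|^2$, so you need $\frac{h''}{h}-2\frac{h'^2}{h^2}>0$. This holds for $h=(3-u)^{-\b}$ with $\b\in(0,1)$, where it equals $\frac{\b(1-\b)}{(3-u)^2}$. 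It fails for $h=e^{\b u}$, where $\log h$ is linear and there is no good term at all.
\item The first-order condition should read $u_{1i}=-\frac{|Du|}{2}\bigl(\frac{h'}{h}|Du|\d_{1i}+\frac{\eta_i}{\eta}\bigr)$.
\item No extra powers of $\eta$ are needed, since $|D\eta|^2\le16\eta$ for $\eta=(1-|x|^2)^2$.
\end{itemize}
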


\begin{lemma}[Pogorelov estimate,
\texorpdfstring{\cite[Theorem 4.1]{CW01}}{}]
\label{lem.Pogorelov-estimate}
Let $\Om\subset B_2\subset\R^n$ be an open domain,
and $u\in C^\infty(\ol\Om)$
be a $2$-convex solution to $\s_2(D^2u)=1$ in $\Om$.
Suppose that there is a $2$-convex function $w\in C^\infty(\ol\Om)$ satisfies
$w>u$ in $\Om$ and $w=u$ on $\p\Om$.
Then
\[
\sup_\Om (w-u)^4|D^2u|
\leq C\frb{n,\norm{Du}_{L^\infty(\Om)},
\norm{Dw}_{L^\infty(\Om)}}.
\]
\end{lemma}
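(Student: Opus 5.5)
The plan is the classical Pogorelov maximum-principle argument, run with the linearized operator $L_u=A_{ij}(D^2u)\p_{ij}$. Set $\eta:=w-u>0$ in $\Om$, $\eta=0$ on $\p\Om$. Consider
\[
G:=4\log\eta+\log\l_{\max}(D^2u)+\frac{a}{2}|Du|^2 ,
\]
where $a>0$ is a constant to be fixed. To avoid nonsmoothness of $\l_{\max}$, I would replace it by $u_{\xi\xi}$ maximized over unit directions $\xi$. The function $G$ tends to $-\infty$ on $\p\Om$, so it attains an interior maximum at some $x_0$. Rotate coordinates so that $D^2u(x_0)$ is diagonal with $\l_1\ge\dots\ge\l_n$, and work with $\log u_{11}$. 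The goal is $\eta^4\l_1\le C$ at $x_0$. Since $\s_1^2=2+|\l|^2$, we have $|D^2u|\le C\s_1\le Cn\l_1$, so this bound gives the estimate everywhere.

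\textbf{Three differential ingredients.} First, since $w$ is $2$-convex, G\aa rding's inequality for the hyperbolic polynomial $\s_2$ gives $L_uw=A_{ij}(D^2u)w_{ij}\ge 0$. Together with $L_uu=2$ this yields $L_u\eta\ge-2$. This is the only place the $2$-convexity of $w$ is used.

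Second, differentiating $\s_2(D^2u)=1$ gives $L_uu_k=0$. Hence
\[
L_u\tfrac12|Du|^2=\sum_i f_i\l_i^2\ge f_1\l_1^2\ge \s_1/n^2
\]
by Lemma \ref{lem.spectral}. This is the good term, of size comparable to $\l_1$.

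Third, differentiating twice gives
\[
L_uu_{11}=-\s_2^{pq,rs}u_{pq1}u_{rs1}=\sum_{p\neq q}u_{pq1}^2-\sum_{p\neq q}u_{pp1}u_{qq1}.
\]
This uses that $\s_2$ is quadratic.

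\textbf{Critical point conditions.} At $x_0$ we have
\[
\frac{4\eta_i}{\eta}+\frac{u_{11i}}{\l_1}+a\l_iu_i=0\quad\text{for each } i,
\qquad\text{and}\qquad 0\ge L_uG .
\]
Expanding $L_uG$ gives
\[
0\ge -\frac{8}{\eta}-\frac{4f_i\eta_i^2}{\eta^2}+\frac{L_uu_{11}}{\l_1}-\frac{f_iu_{11i}^2}{\l_1^2}+a\sum_if_i\l_i^2 .
\]

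\textbf{Main obstacle: the third-derivative terms.} One needs the concavity-type bound
\[
\frac{L_uu_{11}}{\l_1}-\frac{f_iu_{11i}^2}{\l_1^2}\ge -C\frac{f_iu_{11i}^2}{\l_1^2}\quad\text{restricted to } i=1,
\]
plus harmless terms, which is the Chou--Wang inequality. I would get it as follows. Use the off-diagonal terms $2\sum_{i\ge2}u_{11i}^2/\l_1$ coming from $\sum_{p\ne q}u_{pq1}^2$ to absorb the $i\ge2$ part of $f_iu_{11i}^2/\l_1^2$, using $f_i\le\s_1\le n\l_1$. Differentiate the equation once, $\sum f_iu_{ii1}=0$, to control $-\sum u_{pp1}u_{qq1}$ in terms of $u_{111}^2$. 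This step is delicate when $\l_n<0$ is large in magnitude; there I would use $f_1\ge1/\s_1$ and $f_i\ge(1-\frac1{\sqrt2})\s_1$ for $i\ge 2$ from Lemma \ref{lem.spectral}.

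\textbf{Splitting by index.} For $i\ge2$, the terms $f_i\eta_i^2/\eta^2$ are handled through the critical equation. Write $u_{11i}/\l_1=-4\eta_i/\eta-a\l_iu_i$. After the absorption above, what remains is bounded by $C f_i\eta_i^2/\eta^2$ with a better constant, plus $Ca^2f_i\l_i^2u_i^2$, and the latter is dominated by $\tfrac a2 f_i\l_i^2$ once $a$ is small relative to $\|Du\|_{L^\infty}^{-2}$.

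For $i=1$ the critical equation gives
\[
\frac{f_1u_{111}^2}{\l_1^2}\le C\frac{f_1\eta_1^2}{\eta^2}+Ca^2f_1\l_1^2|Du|^2 .
\]
The total bad contribution is then at most
\[
\frac{C}{\eta}+\frac{C\s_1(|Du|^2+|Dw|^2)}{\eta^2}+C\frac{f_1\eta_1^2}{\eta^2}.
\]
Since $f_1\le\s_1$ and $|\eta_1|\le|Du|+|Dw|$, this is at most $C\s_1/\eta^2$ with $C=C(n,\|Du\|,\|Dw\|)$. This is weighed against the good term $\frac a2\s_1/n^2$.

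\textbf{Conclusion.} The balance of good and bad terms forces $\eta^2\le C$ at $x_0$, which is not yet the estimate. To close, I would follow Chou--Wang and split into two cases. If $\eta^4\l_1$ is large at $x_0$, the refined $i=1$ analysis uses the factor $4$ in $4\log\eta$, so that $(4\eta_1/\eta)^2$ beats $4\eta_1^2/\eta^2$ up to the gain from $f_1\l_1^2$. This yields $\eta^4\l_1\le C$ at $x_0$, and hence $\sup_\Om\eta^4|D^2u|\le C$. I expect the precise bookkeeping in this $i=1$ case to be the hardest part, and the exponent $4$ is chosen to make it work.
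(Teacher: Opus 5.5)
The paper gives no proof of this lemma; it quotes Chou--Wang. Your test function is the standard one (essentially the one in Remark~\ref{rmk.sigma-2=f}), and your preliminary computations are correct: $L_uw\ge0$ by G\aa rding, $L_u\tfrac12|Du|^2=\sum_if_i\lambda_i^2$, the formula for $L_uu_{11}$, and the critical-point relations. However, the two steps that carry the proof do not work.

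\textbf{The absorption for $i\ge2$ fails.} The term $2\sum_{i\ge2}u_{11i}^2/\lambda_1$ absorbs $\sum_{i\ge2}f_iu_{11i}^2/\lambda_1^2$ only if $f_i\le2\lambda_1$. After you insert the critical equation into $4f_i\eta_i^2/\eta^2$, an extra $\frac{1+\epsilon}{4}f_iu_{11i}^2/\lambda_1^2$ appears, and then you need roughly $f_i\le\frac85\lambda_1$. Your bound $f_i\le\sigma_1\le n\lambda_1$ does not give this. It is also wrong: $f_i=\sigma_1-\lambda_i>\sigma_1$ when $\lambda_i<0$, and the correct bound is only $f_i<2\sigma_1$. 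The absorption genuinely fails: $\lambda=(M,\dots,M,t)$ with $\sigma_2(\lambda)=1$ has $t\approx-\frac{n-2}{2}M$ and $f_n\approx(n-1)M$.

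\textbf{The final accounting yields no gain.} You bound every bad term by $C\sigma_1/\eta^2$ and the good term only by $a\sigma_1/n^2$. Both are linear in $\sigma_1$, so nothing about $\lambda_1$ follows, as you yourself note. Your proposed repair, a refined $i=1$ analysis, targets the harmless index. Indeed $4f_1\eta_1^2/\eta^2\le Cf_1/\eta^2$, and by the critical equation $f_1u_{111}^2/\lambda_1^2\le32f_1\eta_1^2/\eta^2+2a^2f_1\lambda_1^2u_1^2$; both are beaten by $af_1\lambda_1^2$ once $\eta\lambda_1$ is large. The delicate treatment of $-\sum_{p\ne q}u_{pp1}u_{qq1}$ is also unnecessary. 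From $\sum_pf_pu_{pp1}=0$ you get $\sigma_1\Delta u_1=\sum_p\lambda_pu_{pp1}$, and Cauchy--Schwarz with $|\lambda|^2=\sigma_1^2-2$ shows this term is nonnegative, so $L_uu_{11}\ge0$.

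\textbf{The missing idea} is to compare the bad terms with $\sum_if_i\lambda_i^2$ index by index, with a case split at $x_0$ for a small $\delta=\delta(n)$.

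\emph{Case 1: $|\lambda_j|\ge\delta\lambda_1$ for some $j\ge2$.} Lemma~\ref{lem.spectral} gives $\sum_if_i\lambda_i^2\ge f_j\lambda_j^2\ge(1-\frac{1}{\sqrt2})\delta^2\sigma_1\lambda_1^2$. After the critical equation, all bad terms are at most $C\sigma_1/\eta^2+2a^2|Du|^2\sum_if_i\lambda_i^2$. This includes $8/\eta$, because $\eta$ is bounded and $\sigma_1\ge\sqrt2$. Together with $L_uu_{11}\ge0$, this is contradictory once $\eta\lambda_1\ge C$, and no third-order analysis is needed.

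\emph{Case 2: $|\lambda_i|<\delta\lambda_1$ for all $i\ge2$.} Then $f_i=\lambda_1+\sum_{k\ne1,i}\lambda_k\le(1+n\delta)\lambda_1$ for $i\ge2$, and your absorption now works. The $i\ge2$ third-order bad terms total at most $\frac{5+\epsilon}{4}(1+n\delta)\sum_{i\ge2}u_{11i}^2/\lambda_1\le2\sum_{i\ge2}u_{11i}^2/\lambda_1$; this is exactly where an exponent larger than $1$ on $\eta$ is used. The leftover $Ca^2f_i\lambda_i^2u_i^2$ is absorbed by taking $a\le c\,\|Du\|_{L^\infty}^{-2}$. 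The $i=1$ terms and $8/\eta$ are beaten by $af_1\lambda_1^2\ge a\sigma_1/n^2$ once $\eta\lambda_1$ is large.

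In both cases $\eta\lambda_1\le C$ at $x_0$. Since $\eta\le4\|D\eta\|_{L^\infty(\Omega)}$ (it vanishes on $\partial\Omega$ and $\Omega\subset B_2$), you get $\eta^4\lambda_1\le C$ at $x_0$. Hence $G(x_0)\le C$, which gives the estimate.
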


\section{Proof of the theorem}
\label{sec.proof-of-the-Thm}

Throughout this section,
let $u$ be a smooth solution of \eqref{eqn.sigma-2} in $B_2$
with $\t u>0$ in $B_2$.
We may assume that
\[
K_0:=1+\norm{u}_{L^\infty(B_2)}<+\infty.
\]
Applying Lemma \ref{lem.gradient-estimate} in $B_{1/2}(x)\subset B_2$
for $x\in B_{3/2}$ gives
\[
\norm{Du}_{L^\infty(B_{3/2})}\leq C(n)K_0.
\]
By Lemma \ref{lem.solvability},
there exists a unique smooth $2$-convex solution $v$ of
\begin{equation}\label{eqn.v}
\begin{cases}
\s_2(D^2v)=1/4 &\mathrm{in}\ B_{3/2},\\
v=u &\mathrm{on}\ \p B_{3/2}.
\end{cases}
\end{equation}
The comparison principle gives $u\leq v$ in $\ol{B_{3/2}}$.
Since $\l(D^2v)\in\G_2$, we have $\t v>0$ in $B_{3/2}$,
and hence the maximum principle gives
\[
\sup_{B_{3/2}}v\leq\sup_{\p B_{3/2}}v=\sup_{\p B_{3/2}}u\leq K_0.
\]
On the other hand, $v\geq u\geq-K_0$ in $B_{3/2}$.
Therefore $\norm{v}_{L^\infty(B_{3/2})}\leq K_0$.
Moreover,
for any $x\in B_1$,
applying Lemma \ref{lem.gradient-estimate}
in $B_{1/2}(x)\subset B_{3/2}$ gives
\begin{equation}\label{eqn.gradient-v-u}
2\norm{Dv}_{L^\infty(B_1)}+\norm{Du}_{L^\infty(B_1)}
\leq K,
\end{equation}
where $K=K(n,K_0)\geq1$.
In particular, $\norm{D(v-u)}_{L^\infty(B_1)}\leq K$.

\medskip

Now, we try to quantify the separation between $v$ and $u$.

\subsection{A seed separation}

\begin{lemma}\label{lem.seed}
There exist constants $r=r(n,K)\in(0,1/32)$, $\ve_0=\ve_0(n,K)>0$,
and a point $x_*\in\ol{B_{1/32}}$ such that
\[
v-u\geq\ve_0
\quad\text{in}\ \ol{B_r(x_*)}.
\]
\end{lemma}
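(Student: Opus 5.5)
Set $h:=v-u\geq0$ and $W:=(v+u)/2$. The plan is to exploit the exact linear identity coming from the quadratic structure of $\s_2$. Since $A_{ij}(M)$ is linear in $M$, we have
\[
\s_2(D^2v)-\s_2(D^2u)=A_{ij}(D^2W)h_{ij}=-\tfrac34 .
\]
The coefficients $A_{ij}(D^2W)=\t W\d_{ij}-W_{ij}$ are divergence free: $\p_iA_{ij}(D^2W)=\p_j\t W-\p_j\t W=0$. So the equation can be written as $\p_i(A_{ij}(D^2W)h_j)=-3/4$.

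The first step is to test this against a cutoff. Fix $\eta\in C_c^\infty(B_{1/32})$ with $0\le\eta\le1$, $\eta=1$ on $B_{1/64}$ and $|D\eta|\ls1$, $|D^2\eta|\ls1$. Multiply by $\eta^2$ and integrate by parts twice, using the divergence-free structure to move both derivatives off $h$:
\[
\tfrac34\int\eta^2=-\int\eta^2A_{ij}(D^2W)h_{ij}=-\int h\,A_{ij}(D^2W)(\eta^2)_{ij}.
\]
Now $A(D^2W)=\t W\,I-D^2W$. Here $\t W>0$, since both $\t u$ and $\t v$ are positive. Moreover $\l(D^2W)\in\G_2$, because $\G_2$ is convex; hence $A(D^2W)$ is positive definite and $|A(D^2W)|\le(n-1)\t W$.

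This gives the reverse Chou--Wang estimate
\[
c(n)\le\int_{B_{1/32}}h\,\t W .
\]

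The second step removes the unbounded factor $\t W$. One more integration by parts is needed, and this is what makes the estimate localizable. Use a cutoff $\zeta$ supported in $B_{1/16}$ with $\zeta=1$ on $B_{1/32}$:
\[
\int h\zeta\,\t W=-\int D(h\zeta)\cdot DW+\cdots .
\]
By \eqref{eqn.gradient-v-u}, $|DW|\le K$ and $|Dh|\le K$ in $B_1$. This yields $c(n)\le C(n)K\int_{B_{1/16}}(|Dh|+h)$, which is controlled by $K^2$.

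That bound alone is not enough; I need $h$ to be large somewhere, not $|Dh|$. The better route is to keep $h$ explicit in the bound. Since $h\ge0$, estimate
\[
\int h\zeta\,\t W\le C\int |D(h\zeta)|\,|DW|\le CK\int\bigl(|Dh|\zeta+h|D\zeta|\bigr).
\]
This bound again contains $|Dh|$. To handle it, pass to the pointwise smallness of $h$ by contradiction. Suppose $h<\ve$ at some point of every ball $B_r(x)\subset B_{1/16}$. Then $h\le\ve+Kr$ everywhere on $B_{1/16}$. Now use $\int h\,\t W$ directly, where $\t W$ is integrable on $B_{1/16}$ with bound $\int_{B_{1/16}}\t W\le C(n)K$ (divergence theorem on a slightly larger ball, using the gradient bounds). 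This gives
\[
c(n)\le(\ve+Kr)\,C(n)K ,
\]
which is a contradiction once $r$ and $\ve$ are small compared with $K^{-2}$.

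Hence there is a ball $B_r(x_0)$ with $x_0$ near $B_{1/32}$ on which $\inf h\ge\ve$. Note that $h\ge\ve$ at the point $x_0$ is enough, because the Lipschitz bound then gives $h\ge\ve/2$ on $B_{\ve/(2K)}(x_0)$.

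The final step arranges the ball placement. Run the cutoff argument with $\eta$ supported in $B_{1/32}$. The positivity then forces $\sup_{B_{1/32}}h\ge c_1(n)/K$. Take $x_*$ to be a maximum point in $\ol{B_{1/32}}$, and set $\ve_0=c_1/(2K)$ and $r=\min\{c_1/(2K^2),1/64\}$. The Lipschitz bound $|Dh|\le K$ on $B_1$ then gives $h\ge\ve_0$ on $\ol{B_r(x_*)}$. Only the weak bound $\int_{B_{1/32}}\t W\ls K$ is needed, and this follows from the divergence theorem on $B_{1/16}$ together with $|DW|\le K$ and a cutoff.

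The main obstacle is exactly this justification. Bounding $|A(D^2W)|$ by $\t W$ requires $A(D^2W)\ge0$, which comes from the convexity of $\G_2$. The $L^1$ bound on $\t W$ comes from integrating by parts against the gradient bound.
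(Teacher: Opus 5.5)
Your proposal is correct, and once the abandoned detours are set aside, your final step is the paper's proof. Those detours are the $\zeta$-integration by parts and the contradiction argument, which on its own would not place the center in $\ol{B_{1/32}}$. What remains matches the paper step by step: polarize $\s_2(D^2v)-\s_2(D^2u)=A_{ij}(D^2W)(v-u)_{ij}=-\tfrac34$; integrate twice by parts against a cutoff supported in $B_{1/32}$, using the divergence-free coefficients and $A(D^2W)>0$; bound $\int_{B_{1/32}}\t W\leq C(n)K$ by the divergence theorem; conclude $\sup_{B_{1/32}}(v-u)\geq c_1/K$; and spread this to $\ol{B_r(x_*)}$ with the Lipschitz bound $|D(v-u)|\leq K$.
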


\begin{proof}
Let $\om:=(u+v)/2$ in $B_{3/2}$.
Since $\G_2$ is convex,
we have $\l(D^2\om)\in\G_2$, and hence $A(D^2\om)>0$.
A direct calculation gives
\begin{align}
-\frac34
=\s_2(D^2v)-\s_2(D^2u)
&=\frac{1}{2}\frb{(\t v)^2-\op{tr}(D^2v)^2-\frb{(\t u)^2-\op{tr}(D^2u)^2}}
\nonumber\\
&=\frac12\frb{
\frb{\t v-\t u}\frb{\t v+\t u}
-\op{tr}\frb{(D^2v-D^2u)(D^2v+D^2u)}}
\nonumber\\
&=(\t\om)\frb{\t v-\t u}-\op{tr}\frb{D^2\om(D^2v-D^2u)}
\nonumber\\
&=\sum_{i,j=1}^n\frb{(\t \om)\d_{ij}-\om_{ij}}(v_{ij}-u_{ij})
\nonumber\\
&=\sum_{i,j=1}^nA_{ij}(D^2\om)(v_{ij}-u_{ij}).
\label{eqn.polarized}
\end{align}
Fix $\rho=1/32$.
Choose $\eta\in C_c^\infty(B_1)$ satisfying
$\eta\geq0$ in $B_1$, $\int_{B_1}\eta=1$ and $\norm{D^2\eta}_{L^\infty(B_1)}\leq C(n)$,
and set $\eta_\rho(x):=\rho^2\eta(x/\rho)$.
Then $\eta_\rho\in C_c^\infty(B_\rho)$, $\int_{B_\rho}\eta_\rho=\rho^{n+2}$
and $\norm{D^2\eta_\rho}_{L^\infty(B_\rho)}\leq C(n)$.
Multiplying \eqref{eqn.polarized} by $\eta_\rho$,
integrating by parts twice,
and using $\sum_{i=1}^n\p_iA_{ij}(D^2\om)=0$ in $B_\rho$ for all $j\in\Z_{[1,n]}$,
we obtain
\begin{align*}
\frac34\rho^{n+2}
&=\frac34\int_{B_\rho}\eta_\rho
=-\int_{B_\rho}\sum_{i,j=1}^n\eta_\rho A_{ij}(D^2\om)(v_{ij}-u_{ij})
\\
&=-\int_{B_\rho}(v-u)\sum_{i,j=1}^nA_{ij}(D^2\om)\p_{ij}\eta_\rho
\\
&\leq C\sup_{B_\rho}(v-u)\int_{B_\rho}\op{tr}A(D^2\om).
\end{align*}
Moreover,
the divergence theorem and the gradient estimate \eqref{eqn.gradient-v-u} give
\[
\int_{B_\rho}\op{tr}A(D^2\om)
=\int_{B_\rho}(n-1)\t \om
=(n-1)\int_{\p B_\rho}
D\om\cdot\frac{x}{|x|}
\leq C(n,K)\rho^{n-1}.
\]
Thus
\[
\sup_{B_\rho}(v-u)
\geq c_0
\]
for some $c_0=c_0(n,K)>0$.
Choose $x_*\in\ol{B_\rho}$ such that $v(x_*)-u(x_*)\geq c_0$.
By the gradient estimate \eqref{eqn.gradient-v-u},
we may take $r=r(n,K)\in(0,1/32)$ sufficiently small
and $\ve_0=\ve_0(n,K)>0$ such that,
for any $x\in\ol{B_r(x_*)}$,
\[
v(x)-u(x)
\geq v(x_*)-u(x_*)-\norm{D(v-u)}_{L^\infty(B_1)}|x-x_*|\\
\geq c_0-Kr
>\ve_0.
\]
Finally, since $|x_*|\leq1/32$ and $r<1/32$,
we have $|x_*|+r<1/16$.
Hence $\ol{B_r(x_*)}\subset B_{1/16}$,
and the proof is complete.
\end{proof}

\medskip

The preceding lemma gives a quantitative separation only on one small ball.
To propagate this information through the interior,
we next construct a barrier of the linearized operator $L_v$.
\subsection{A barrier function of the linearized operator $L_v$}\label{subsec.construct} 

Keep $r=r(n,K)\in(0,1/32)$ from Lemma \ref{lem.seed}.
Choose $\b=\b(n,K)>0$ sufficiently small such that
\[
\b K\leq\frac{r}{8\sqrt n},
\]
and then choose $\a=\a(n,K)\geq2$ sufficiently large such that
\[
6\a r^2>4+12Kr+\frac{\b}{2}K^2
\quad\text{and}\quad
\b\frb{1-\frac{1}{\sqrt2}}\a\geq6n,
\]
and then choose $\g=\g(n,K)>0$ sufficiently small such that
\[
\frac{49r^2}{64n^3\g}>n\a
\quad\text{and}\quad
\frb{1-\frac1{\sqrt2}}\frac{\a^2r^2}{4n\g}>n\a.
\]

\medskip

Fix any $y\in B_{1/2}$.
Then $B_{4r}(y)\Subset B_1$.
Consider the auxiliary function
\begin{equation}\label{eqn.varphi}
\vp_y(x)
:=2\frb{(x-y)\cdot Dv(x)-v(x)+v(y)}
+\frac{\a}{2}|x-y|^2-2\b\abs{Dv(x)}^2.
\end{equation}
The gradient estimate \eqref{eqn.gradient-v-u} gives
\begin{equation}\label{eqn.g-bound}
\abs{2\frb{(x-y)\cdot Dv(x)-v(x)+v(y)}}\leq 2K|x-y|
\quad\text{for any}\
x\in B_1.
\end{equation}
Then
\begin{gather}
\vp_y\geq-8Kr-\frac{\b}{2}K^2\quad\text{in}\ B_{4r}(y),\
\label{eqn.vpy-lower}
\\
a_y:=\sup_{\ol{B_{2r}(y)}}\vp_y\leq 4Kr+\frac{\a}{2}(2r)^2.
\label{eqn.ay-upper}
\end{gather}
With the choice of $\a$ above,
\begin{gather*}
\inf_{\p B_{4r}(y)}\vp_y
\geq-8Kr+\frac{\a}{2}(4r)^2-\frac{\b}{2}K^2
>4Kr+\frac{\a}{2}(2r)^2+4
\geq a_y+4=:c_y.
\end{gather*}
Then
\[
\vp_y<c_y
\quad\text{in}\ \ol{B_{2r}(y)},
\quad
\vp_y>c_y
\quad\text{on}\ \p B_{4r}(y).
\]
Let $\Om_y$ be the connected component of $\set{\vp_y<c_y}$ containing $B_{2r}(y)$.
Then
\begin{equation}\label{eqn.Omega-y}
B_{2r}(y)\subset\Om_y\Subset B_{4r}(y).
\end{equation}
Next, set the associated exponential auxiliary function
\begin{equation}\label{eqn.w-y}
\psi_y:=e^{(c_y-\vp_y)/\g}-1
\quad\text{in}\ \Om_y.
\end{equation}
Then $\psi_y>0$ in $\Om_y$ and $\psi_y=0$ on $\p\Om_y$.

\medskip

The key point at this moment is to show that
$\psi_y$ is a sub-solution of $L_v$ outside $B_r(y)$.

\begin{lemma}\label{lem.cap-inequality}
For any $y\in B_{1/2}$, we have
\begin{equation}\label{eqn.Lv-w-positive}
L_v\psi_y>0\quad\text{in}\ \Om_y\sm\ol{B_r(y)}.
\end{equation}
\end{lemma}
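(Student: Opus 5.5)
The plan is to compute $L_v\psi_y$ directly and reduce \eqref{eqn.Lv-w-positive} to a pointwise algebraic inequality for the eigenvalues of $D^2v$. Since $\psi_y=e^{(c_y-\vp_y)/\g}-1$,
\[
L_v\psi_y=\frac{e^{(c_y-\vp_y)/\g}}{\g}\Bigl(\frac1\g A_{ij}(D^2v)(\vp_y)_i(\vp_y)_j-L_v\vp_y\Bigr),
\]
so it suffices to show $\g^{-1}A_{ij}(\vp_y)_i(\vp_y)_j>L_v\vp_y$ at each $x\in\Om_y\sm\ol{B_r(y)}$. Fix such an $x$ and rotate coordinates so that $D^2v(x)=\op{diag}(\l_1,\dots,\l_n)$ with $\l_1\ge\dots\ge\l_n$. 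Then $A=\op{diag}(f_1,\dots,f_n)$ with $f_i=\s_1-\l_i$. Differentiating \eqref{eqn.varphi} gives
\[
(\vp_y)_i=(2\l_i+\a)(x-y)_i-4\b\l_iv_i .
\]
For the second derivatives, every term containing third derivatives of $v$ has the form $A_{ij}v_{kij}=\p_k\s_2(D^2v)=0$, so these terms vanish. Using $A_{ij}v_{ij}=2\s_2=1/2$ and $\op{tr}A=(n-1)\s_1$, we get
\[
L_v\vp_y=1+\a(n-1)\s_1-4\b\sum_i f_i\l_i^2 .
\]
Because $\s_1^2\ge2\s_2=1/2$ and $\a\ge2$, we have $\a\s_1\ge1$, and hence $L_v\vp_y\le n\a\s_1-4\b\sum_if_i\l_i^2$. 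The target therefore becomes
\[
\frac1\g\sum_if_i(\vp_y)_i^2+4\b\sum_if_i\l_i^2>n\a\s_1 .
\]

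To apply Lemma \ref{lem.spectral}, which is stated for $\s_2=1$, I rescale to $2\l$. This gives $f_1\ge c/\s_1$, $f_i\ge(1-\tfrac1{\sqrt2})\s_1$ for $i\ge2$, and $f_1\l_1^2\ge\s_1/(4n^2)$, up to harmless constant adjustments. Since $|x-y|\ge r$, some index $i$ has $|(x-y)_i|\ge r/\sqrt n$. I also use $|4\b\l_iv_i|\le 4\b K|\l_i|\le|\l_i|r/(2\sqrt n)$, which follows from \eqref{eqn.gradient-v-u} and the choice $\b K\le r/(8\sqrt n)$.

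\emph{Case $i=1$.} Here $\l_1>0$, so $|(2\l_1+\a)(x-y)_1|\ge(2\l_1+\a)r/\sqrt n$. The error term is at most $\l_1 r/(2\sqrt n)$, so $|(\vp_y)_1|\ge \tfrac32\l_1 r/\sqrt n$ (in fact a bit more, coming from $\a$). Then
\[
\frac1\g f_1(\vp_y)_1^2\ge \frac{9r^2}{4n\g}f_1\l_1^2\gtrsim\frac{r^2}{n^3\g}\s_1>n\a\s_1,
\]
by the first condition on $\g$. Matching the constant $49/64$ is bookkeeping that I would check carefully.

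\emph{Case $i\ge2$.} I split further according to the size of $|\l_i|$. If $|\l_i|\ge\a$, roughly, then $4\b f_i\l_i^2\ge4\b(1-\tfrac1{\sqrt2})\s_1\a^2\ge 24n\a\s_1$ by the condition $\b(1-\tfrac1{\sqrt2})\a\ge6n$, and this term alone wins. If $|\l_i|\le\a/4$ (the threshold is to be tuned, say $|\l_i|\le \a/4$ versus $\ge\a/4$, adjusting the $\b$ condition accordingly), then $|2\l_i(x-y)_i|+|4\b\l_iv_i|\le\tfrac\a2|(x-y)_i|+\tfrac{\a r}{8\sqrt n}$, so $|(\vp_y)_i|\ge\a r/(2\sqrt n)$ roughly. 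This gives
\[
\frac1\g f_i(\vp_y)_i^2\ge(1-\tfrac1{\sqrt2})\frac{\a^2r^2}{4n\g}\s_1>n\a\s_1,
\]
by the second condition on $\g$.

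The main obstacle is the intermediate regime $i\ge2$ with $\l_i$ negative and of size comparable to $\a$. In that regime $2\l_i+\a$ can nearly vanish, so the gradient term can degenerate. This is exactly where the $-2\b|Dv|^2$ term is needed. I would first try choosing the threshold at $|\l_i|\ge\a/4$ and verify that $4\b f_i\l_i^2\ge\tfrac{\b}{4}(1-\tfrac1{\sqrt2})\a^2\s_1$ still exceeds $n\a\s_1$, given the $6n$ margin in the $\b$–$\a$ condition. If the constants are too tight, I would enlarge the stated margin accordingly.
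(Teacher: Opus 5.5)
Your proposal is correct and is essentially the paper's proof. It uses the same formula for $L_v\psi_y$ (the paper normalizes $D^2v=\frac12\op{diag}(\l_1,\dots,\l_n)$ so that Lemma \ref{lem.spectral} applies verbatim), the same choice of a coordinate with $|x_j-y_j|\ge r/\sqrt n$, and the same three cases. The one difference is that for $j\ge2$ the paper splits on whether $|(\vp_y)_j|\ge\frac{\a}{2}|x_j-y_j|$; in the bad case this forces $|\l_j|>\frac49\a$ automatically, which removes your threshold tuning and ``intermediate regime.''

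If you keep your split on $|\l_j|$, two bookkeeping fixes are needed to close with the paper's stated constants. First, use $|Dv|\le K/2$ from \eqref{eqn.gradient-v-u}; this is what yields the $\frac{49}{64}$ in Case 1. Second, take the single threshold $|\l_j|=\frac{2\a}{9}$ in your unnormalized eigenvalues. Your first try $\a/4$ only gives $|(\vp_y)_j|\ge\frac{7\a}{16}|x_j-y_j|$ on the small-eigenvalue side, which falls short of the $\frac{\a}{2}$ that the stated condition on $\g$ requires.
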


\begin{proof}
Fix any $x\in\Om_y\sm\ol{B_r(y)}$.
After a rotation of coordinates, we may assume that
\[
D^2v(x)
=\frac12\,\op{diag}(\l_1,\l_2,\ldots,\l_n),
\quad
\l_1\geq\l_2\geq\cdots\geq\l_n.
\]
All quantities below are evaluated at $x$.
Since $L_vv=2\s_2(D^2v)=\frac12$
and $\sum_{i,j=1}^nA_{ij}(D^2v)v_{ijk}=0$ for all $k\in\Z_{[1,n]}$,
a direct calculation gives
\begin{align*}
(\vp_y)_i
&=\frb{x_i-y_i-2\b v_i}\l_i+\a(x_i-y_i),
\\
(\vp_y)_{ij}
&=2\frb{v_{ij}+\sum_{k=1}^n(x_k-y_k)v_{ijk}}+\a\d_{ij}
-4\b\sum_{k=1}^n v_{kj}v_{ki}
-4\b\sum_{k=1}^nv_kv_{ijk},
\\
L_v\vp_y
&=\frac12\frb{2+(n-1)\a\s_1(\l)-\b\sum_{i=1}^nf_i\l_i^2},
\\
L_v\psi_y
&=\frac{\psi_y+1}{2\g}
\frb{\frac{1}{\g}\sum_{i=1}^nf_i(\vp_y)_i^2
+\b\sum_{i=1}^nf_i\l_i^2-2-(n-1)\a\s_1(\l)},
\end{align*}
where $f_i:=\p\s_2/\p_{\l_i}$ as in Lemma \ref{lem.spectral}.
Thus, to show $L_v\psi_y>0$, it suffices to prove
\begin{equation}\label{eqn.key-cap-target}
\frac{1}{\g}
\sum_{i=1}^nf_i(\vp_y)_i^2
+\b\sum_{i=1}^nf_i\l_i^2
>2+(n-1)\a\s_1(\l).
\end{equation}
By $\l\in\G_2$ and $2=2\s_2(\l)=\s_1(\l)^2-\sum_{i=1}^n\l_i^2$,
we have $\s_1(\l)\geq\sqrt2$, and hence
\begin{equation}\label{eqn.rhs-bound}
2+(n-1)\a\s_1(\l)<n\a\s_1(\l).
\end{equation}

Since $|x-y|\geq r$, there exists $j\in\Z_{[1,n]}$ such that
\[
|x_j-y_j|\geq|x-y|/\sqrt n\geq r/\sqrt n.
\]
Set $z_j:=x_j-y_j-2\b v_j$.
Then $(\vp_y)_j=z_j\l_j+\a(x_j-y_j)$.
The gradient estimate \eqref{eqn.gradient-v-u} and the choice of $\b$ give
\[
2\b|v_j|
\leq\b K
\leq\frac{r}{8\sqrt n}
\leq\frac18|x_j-y_j|,
\]
and therefore,
\begin{gather}
\frb{x_j-y_j}z_j
=(x_j-y_j)^2-2\b\frb{x_j-y_j}v_j
\geq\frac{7}{8}|x_j-y_j|^2>0,
\label{eqn.zj-bound}\\
\frac{7}{8}|x_j-y_j|
\leq|z_j|
\leq\frac{9}{8}|x_j-y_j|.
\nonumber
\end{gather}
Next, we distinguish three cases.

\medskip

\textit{Case 1: $j=1$.}
Since $\l_1>0$ and $z_1(x_1-y_1)>0$,
the two terms in $(\vp_y)_1$ have the same sign.
Thus \eqref{eqn.zj-bound} gives
\[
|(\vp_y)_1|\geq\frac{7}{8}|x_1-y_1|\l_1\geq\frac{7r}{8\sqrt n}\l_1.
\]
Hence, by Lemma \ref{lem.spectral} and the choice of $\g$,
\begin{align*}
\frac{1}{\g}\sum_{i=1}^nf_i(\vp_y)_i^2
\geq\frac{49r^2}{64n\g}f_1\l_1^2
\geq\frac{49r^2}{64n^3\g}\s_1(\l)
>n\a\s_1(\l).
\end{align*}

\medskip

\textit{Case 2: $j\geq2$ and $|(\vp_y)_j|\geq\frac{\a}{2}|x_j-y_j|$.}
By Lemma \ref{lem.spectral},
it follows that $f_j\geq\frb{1-\frac1{\sqrt2}}\s_1(\l)$.
The choice of $\g$ gives
\begin{align*}
\frac{1}{\g}
\sum_{i=1}^nf_i(\vp_y)_i^2
\geq\frb{1-\frac1{\sqrt2}}\frac{\a^2r^2}{4n\g}\s_1(\l)
>n\a\s_1(\l).
\end{align*}

\medskip

\textit{Case 3: $j\geq2$ and $|(\vp_y)_j|<\frac{\a}{2}|x_j-y_j|$.}
Then
\[
|z_j\l_j|\geq\a|x_j-y_j|-|(\vp_y)_j|
>\frac{\a}{2}|x_j-y_j|
\geq\frac{4\a}{9}|z_j|,
\]
so that $|\l_j|>\frac{4}{9}\a$.
Using Lemma \ref{lem.spectral} and the choice of $\a$, we obtain
\begin{align*}
\b\sum_{i=1}^nf_i\l_i^2
\geq\b f_j\l_j^2
>\b\frb{1-\frac{1}{\sqrt2}}\s_1(\l)\cdot\frac{16}{81}\a^2
> n\a\s_1(\l).
\end{align*}

In all three cases, \eqref{eqn.key-cap-target} follows from \eqref{eqn.rhs-bound}.
Hence $L_v\psi_y>0$ in $\Om_y\sm\ol{B_r(y)}$.
\end{proof}

\medskip

In view of Lemma \ref{lem.cap-inequality},
the auxiliary function $\psi_y$ can now be used as a barrier for $v-u$.
This gives a quantitative propagation of the lower bound of $v-u$
from $B_r(y)$ to the larger ball $B_{2r}(y)$.

\subsection{Propagation of the separation}\label{subsec.propagation}

\begin{lemma}\label{lem.one-step}
Let $y\in B_{1/2}$ and $\e>0$.
If $v-u\geq\e$ in $\ol{B_r(y)}$,
then there exists $\th=\th(n,K)\in(0,1)$ such that
\[
v-u\geq\th\e\quad\text{in}\ \ol{B_{2r}(y)}.
\]
\end{lemma}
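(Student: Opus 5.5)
Compare $v-u$ with a multiple of the barrier $\psi_y$ on the annular region $U:=\Om_y\sm\ol{B_r(y)}$. Set
\[
h:=v-u-\frac{\e}{M}\,\psi_y,\qquad M:=\sup_{\ol{B_r(y)}}\psi_y .
\]
By the concavity computation in the introduction, $L_v(v-u)\leq\s_2^{1/2}(D^2v)-\s_2^{1/2}(D^2u)=-1/2<0$. Lemma \ref{lem.cap-inequality} gives $L_v\psi_y>0$ in $U$. Hence
\[
L_vh<0\quad\text{in }U,
\]
so $h$ is a strict supersolution of $L_v$ there. Since $A(D^2v)>0$ and $h$ is smooth on $\ol U$, the minimum principle says $h$ attains its minimum over $\ol U$ on $\p U\subset\p\Om_y\cup\p B_r(y)$.

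We check $h\geq0$ on both parts of the boundary.
\begin{itemize}
\item On $\p\Om_y$ we have $\psi_y=0$ and $v-u\geq0$ by the comparison principle, so $h\geq0$.
\item On $\p B_r(y)$ the hypothesis gives $v-u\geq\e$, while $\psi_y\leq M$ by the definition of $M$, so again $h\geq0$.
\end{itemize}
Therefore $h\geq0$ in $U$. Together with the hypothesis on $\ol{B_r(y)}$ (where $\psi_y\le M$), this gives $v-u\geq\frac{\e}{M}\psi_y$ on $\Om_y$, and in particular on $\ol{B_{2r}(y)}\subset\Om_y$ by \eqref{eqn.Omega-y}.

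It remains to bound $\psi_y$ from below on $\ol{B_{2r}(y)}$ and from above on $\ol{B_r(y)}$, uniformly in $y$.
\begin{itemize}
\item \emph{Lower bound.} On $\ol{B_{2r}(y)}$ we have $\vp_y\leq a_y=c_y-4$ by the definitions \eqref{eqn.ay-upper} and $c_y=a_y+4$. Hence
\[
\psi_y\geq e^{4/\g}-1>0 .
\]
\item \emph{Upper bound.} By \eqref{eqn.vpy-lower}, $\vp_y\geq-8Kr-\frac{\b}{2}K^2$ in $B_{4r}(y)$. By \eqref{eqn.ay-upper}, $c_y\leq 4Kr+2\a r^2+4$. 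Hence
\[
M\leq\exp\Bigl(\bigl(12Kr+2\a r^2+4+\tfrac{\b}{2}K^2\bigr)/\g\Bigr)-1 .
\]
\end{itemize}
Since $r,\a,\b,\g$ depend only on $(n,K)$, setting
\[
\th:=\frac{e^{4/\g}-1}{e^{(12Kr+2\a r^2+4+\b K^2/2)/\g}-1}
\]
gives $\th=\th(n,K)$, and $\th\in(0,1)$ because the exponent in the denominator exceeds $4/\g$. This yields $v-u\geq\th\e$ on $\ol{B_{2r}(y)}$.

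The only delicate point is applying the maximum principle on $U$. It needs the strict differential inequality, which holds only off $\ol{B_r(y)}$; that is why the inner boundary $\p B_r(y)$ must be handled by the hypothesis. It also needs $\ol U\subset B_1$, where $v$ is smooth and $L_v$ is uniformly elliptic on compact sets; this follows from $\Om_y\Subset B_{4r}(y)\Subset B_1$. Once these are checked, no further estimates are required.
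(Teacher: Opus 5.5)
Your proposal is correct and follows essentially the same route as the paper: you compare $v-u$ with $\frac{\e}{M}\psi_y$ on $\Om_y\sm\ol{B_r(y)}$ via $L_v(v-u)\le -\frac12$, Lemma \ref{lem.cap-inequality} and the maximum principle, and then use the uniform bounds $\psi_y\ge e^{4/\g}-1$ on $\ol{B_{2r}(y)}$ and $\psi_y\le C(n,K)$ coming from \eqref{eqn.vpy-lower}--\eqref{eqn.ay-upper}. The differences are only cosmetic: you normalize by $\sup_{\ol{B_r(y)}}\psi_y$ rather than by a bound on $\sup_{\Om_y}\psi_y$, you work on the whole annular set rather than one connected component, and you show $\th<1$ directly rather than setting $\th=\min\{\frac12,c_0/C_0\}$.
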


\begin{proof}
Let $U_y$ be the connected component of $\Om_y\sm\ol{B_r(y)}$
containing $B_{2r}(y)\sm\ol{B_r(y)}$.
Then
\[
\p U_y\subset\p\Om_y\cup\p B_r(y).
\]
Using $\Om_y\subset B_{4r}(y)$,
the lower bound \eqref{eqn.vpy-lower} of $\vp_y$ in $B_{4r}(y)$
and the upper bound \eqref{eqn.ay-upper} of $a_y$,
we obtain
\[
0<c_y-\vp_y=a_y+2-\vp_y\leq C(n,K)
\quad\text{in}\ \Om_y.
\]
On the other hand, $c_y-\vp_y\geq c_y-a_y=4$ in $\ol{B_{2r}(y)}$.
Therefore, recalling the definition
\[
\psi_y=e^{(c_y-\vp_y)/\g}-1,
\]
there exist constants $c_0=c_0(n,K)>0$ and $C_0=C_0(n,K)>0$ such that
\[
0<\psi_y\leq C_0
\quad\text{in}\ \Om_y,
\qquad
\psi_y\geq c_0
\quad\text{in}\ \ol{B_{2r}(y)}.
\]
Thus, $v-u\geq\e\geq\frac{\e}{C_0}\psi_y$ on $\p U_y\cap\p B_r(y)$.
Moreover,
$v-u\geq0=\psi_y$ on $\p U_y\cap\p\Om_y$.
Consequently,
\[
(v-u)\geq \frac{\e}{C_0}\psi_y\quad\text{on}\ \p U_y.
\]
By the concavity of $\s_2^{1/2}$ on $\G_2$
and $\s_2(D^2v)=1/4$,
we have
\begin{equation}\label{eqn.L-v-u<-1/2}
1=\s_2(D^2u)^{1/2}
\leq\s_2(D^2v)^{1/2}+\frac{1}{2\s_2(D^2v)^{1/2}}A_{ij}(D^2v)(u-v)_{ij}
=\frac12-L_v(v-u)
\quad\text{in}\ B_{3/2}.
\end{equation}
Together with Lemma \ref{lem.cap-inequality}, this gives
\[
L_v(v-u)\leq-\frac{1}{2}<0<\frac{\e}{C_0}L_v\psi_y
\quad\text{in}\ U_y.
\]
The maximum principle therefore yields $v-u\geq\frac{\e}{C_0}\psi_y$ in $\ol{U_y}$.
Since $\psi_y\geq c_0$ in $\ol{B_{2r}(y)}$, we obtain
\[
v-u\geq\frac{\e}{C_0}\psi_y\geq\frac{c_0}{C_0}\e
\quad\text{in}\ B_{2r}(y)\sm\ol{B_r(y)}.
\]
Set $\th:=\min\set{\frac{1}{2},\frac{c_0}{C_0}}$.
Then $v-u\geq\e\geq\th\e$ in $\ol{B_r(y)}$.
Thus $v-u\geq\th\e$ in $\ol{B_{2r}(y)}$.
\end{proof}

\medskip

Iterating Lemma \ref{lem.one-step} along a finite chain of intersecting balls,
we obtain a uniform positive lower bound for $v-u$ in $B_{1/2}$.

\begin{lemma}[Uniform interior separation]\label{lem.separation}
There exists $\ve_1=\ve_1(n,K)>0$ such that
\begin{equation}\label{eqn.uniform-separation}
v-u\geq\ve_1\quad\text{in}\ B_{1/2}.
\end{equation}
\end{lemma}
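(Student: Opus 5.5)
The plan is to start from the seed ball of Lemma \ref{lem.seed} and apply Lemma \ref{lem.one-step} repeatedly along a chain of balls of radius $r$ whose centers move by steps of length at most $r$. The number of steps will depend only on $n$ and $K$.

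Let $x_*\in\ol{B_{1/32}}$ and $r=r(n,K)$ be as in Lemma \ref{lem.seed}, so that $v-u\geq\ve_0$ on $\ol{B_r(x_*)}$ and $x_*\in B_{1/2}$.

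Fix an arbitrary target point $z\in B_{1/2}$. Consider the segment from $x_*$ to $z$; since $B_{1/2}$ is convex, the whole segment lies in $B_{1/2}$. Place points
\[
y_0=x_*,\ y_1,\dots,y_N=z
\]
equally spaced on this segment, with $|y_{k+1}-y_k|\leq r$. Because $|z-x_*|<1$, we can take $N:=\lceil 1/r\rceil$, which depends only on $(n,K)$. Every $y_k$ then lies in $B_{1/2}$, so Lemma \ref{lem.one-step} applies at each center.

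The induction runs as follows. Suppose $v-u\geq\th^k\ve_0$ on $\ol{B_r(y_k)}$; the case $k=0$ is Lemma \ref{lem.seed}.
\begin{itemize}
\item Lemma \ref{lem.one-step}, applied with center $y_k$ and $\e=\th^k\ve_0$, gives $v-u\geq\th^{k+1}\ve_0$ on $\ol{B_{2r}(y_k)}$.
\item Since $|y_{k+1}-y_k|\leq r$, we have $\ol{B_r(y_{k+1})}\subset\ol{B_{2r}(y_k)}$.
\item Hence $v-u\geq\th^{k+1}\ve_0$ on $\ol{B_r(y_{k+1})}$, which closes the induction.
\end{itemize}

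After $N$ steps we reach $v(z)-u(z)\geq\th^N\ve_0$. We therefore set $\ve_1:=\th^{\lceil1/r\rceil}\ve_0$. This depends only on $(n,K)$ and not on $z$, which proves \eqref{eqn.uniform-separation}.

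There is no substantial obstacle here. The only point to check is that every center used stays in $B_{1/2}$, which is the hypothesis of Lemma \ref{lem.one-step}. Convexity of $B_{1/2}$ together with $x_*\in B_{1/2}$ guarantees this. Uniformity of $\ve_1$ then follows because the chain length is bounded by $N=\lceil 1/r\rceil$ independently of $z$.
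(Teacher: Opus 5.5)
Your proposal is correct and is essentially the paper's own proof. Both start from the seed ball $\ol{B_r(x_*)}$, chain along the segment to the target point with steps of length at most $r$ (all centers lie in $B_{1/2}$), apply Lemma \ref{lem.one-step} at each step using $\ol{B_r(y_{k+1})}\subset\ol{B_{2r}(y_k)}$, and conclude with $\ve_1=\th^{N}\ve_0$ for a chain length $N\lesssim 1/r$ that does not depend on the target point.
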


\begin{proof}
Recall from Lemma \ref{lem.seed} that there exist
$r=r(n,K)\in(0,1/32)$, $x_*\in\ol{B_{1/32}}$ and $\ve_0=\ve_0(n,K)>0$ such that
\[
v-u\geq\ve_0\quad\text{in}\ \ol{B_r(x_*)}.
\]
Given any $y\in B_{1/2}$,
choose points $y_0:=x_*$, $y_1$, $y_2$, $\dots$, $y_N:=y$
on the line segment from $x_*$ to $y$ so that
\[
|y_k-y_{k-1}|\leq r\ \text{for all}\ k\in\Z_{[1,N]},
\quad
N\leq N_*:=1+\frac{1}{r}.
\]
Then $\set{y_k}_{k=0}^N\subset B_{1/2}$.
Repeated application of Lemma \ref{lem.one-step},
together with $B_r(y_{k+1})\subset B_{2r}(y_k)$
gives $v-u\geq\th^k\ve_0$ in $\ol{B_r(y_k)}$ for all $k\in\Z_{[0,N]}$.
In particular,
\[
(v-u)(y)\geq\ve_0\th^N\geq\ve_0\th^{N_*}.
\]
Thus \eqref{eqn.uniform-separation} holds with $\ve_1=\ve_0\th^{N_*}>0$.
\end{proof}

\medskip

To construct the desired comparison domain $\Om$,
we derive an upper bound of $v-u$ near $\p B_{3/2}$.
\subsection{Boundary decay}

\begin{lemma}\label{lem.boundary-decay}
There exists $C=C(n,K)$ such that
\begin{equation}\label{eqn.boundary-decay}
0\leq (v-u)(x)
\leq C d_x^{1/2}
\quad\text{for any}\ x\in B_{3/2},
\end{equation}
where $d_x$ denotes the distance from $x$ to $\p B_{3/2}$.
\end{lemma}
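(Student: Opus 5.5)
The lower bound $v-u\geq0$ in \eqref{eqn.boundary-decay} is the comparison principle already used above, so only the upper bound needs proof. I will build a linear upper barrier for $v$ at each boundary point and combine it with the Lipschitz bound on $u$ near $\p B_{3/2}$.

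\textbf{Step 1: Lipschitz bound on the boundary data.} Set $R:=3/2$ and $g:=u|_{\p B_R}$. Every $x\in\ol{B_R}$ satisfies $B_{1/2}(x)\subset B_2$, so Lemma \ref{lem.gradient-estimate} applies there. Hence $|Du|\leq L:=C(n)K_0$ on $\ol{B_R}$, and we may assume $L\leq K$ after enlarging $K$. In particular $u$ is $L$-Lipschitz on $\ol{B_R}$, so
\[
g(z)\leq g(x_0)+L|z-x_0|\quad\text{for all}\ z,x_0\in\p B_R.
\]

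\textbf{Step 2: a linear barrier.} Fix $x_0\in\p B_R$ and $\e>0$. For $z\in\p B_R$ we have $L|z-x_0|\leq L\e+\frac{L}{\e}|z-x_0|^2$. The key point is that on the sphere $|z-x_0|^2=2R^2-2z\cdot x_0$, which is affine in $z$. So the affine, hence harmonic, function
\[
b(x):=g(x_0)+L\e+\frac{L}{\e}\frb{2R^2-2x\cdot x_0}
\]
satisfies $b\geq g=v$ on $\p B_R$. Since $v$ is $2$-convex, $\t v>0$, so $v$ is subharmonic. The maximum principle then gives $v\leq b$ in $B_R$.

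\textbf{Step 3: choice of $x_0$ and $\e$.} For $x\in B_R$ we have the identity $2R^2-2x\cdot x_0=|x-x_0|^2+R^2-|x|^2$, and $R^2-|x|^2\leq 2Rd_x=3d_x$. Choose $x_0$ to be the boundary point nearest to $x$, so $|x-x_0|=d_x\leq R$. Then
\[
v(x)\leq u(x_0)+L\e+\frac{L}{\e}\frb{d_x^2+3d_x}\leq u(x_0)+L\e+\frac{5L}{\e}d_x .
\]
Taking $\e=d_x^{1/2}$ gives $v(x)\leq u(x_0)+6Ld_x^{1/2}$. By Step 1, $u(x)\geq u(x_0)-Ld_x$. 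Since $d_x\leq R$, this yields $(v-u)(x)\leq 6Ld_x^{1/2}+Ld_x\leq C(n,K)d_x^{1/2}$.

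\textbf{Main obstacle.} I expect the only delicate point to be getting an upper barrier for $v$ that uses no regularity of $v$ up to the boundary beyond continuity, since the estimate must depend only on $n$ and $K$. The affine-on-the-sphere trick handles this: it uses only subharmonicity of $v$ and the Lipschitz bound of $u$ on $\ol{B_{3/2}}$, which holds because $B_{3/2}\Subset B_2$. The exponent $1/2$ is what this barrier naturally produces, and it suffices for keeping $\{w>u\}$ away from $\p B_{3/2}$.
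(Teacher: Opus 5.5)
Your proof is correct, but it takes a different route from the paper's. The two arguments differ only in how they obtain the upper bound for $v$ near $\p B_{3/2}$.

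The paper introduces the harmonic extension $h$ of $u|_{\p B_{3/2}}$ and uses $\t v>0$ to get $v\leq h$. It then cites the global $C^{1/2}$ estimate for harmonic functions with Lipschitz boundary data (Caffarelli--Cabr\'e) to bound $h(x)-h(x_0)$ by $C(n)K d_x^{1/2}$.

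You never form $h$. Instead you compare $v$ directly with a family of explicit affine majorants $b$. These rest on the observation that $|z-x_0|^2=2R^2-2z\cdot x_0$ is affine in $z$ on the sphere, so $b$ dominates the Lipschitz boundary data. You then optimize the free parameter at $\e=d_x^{1/2}$. Each step checks out:
\begin{itemize}
\item the elementary bound $t\leq\e+t^2/\e$;
\item the identity $2R^2-2x\cdot x_0=|x-x_0|^2+R^2-|x|^2$;
\item the bound $R^2-|x|^2\leq 3d_x$;
\item the use of $d_x\leq R$ to absorb $d_x^2$ and $d_x$ into $d_x^{1/2}$.
\end{itemize}

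Your argument is self-contained and elementary. It is essentially the barrier proof that underlies the cited H\"older estimate, specialized to a ball. It uses only subharmonicity of $v$ and the Lipschitz bound for $u$ on $\ol{B_{3/2}}$, and in effect your $b$ is an explicit majorant of $h$. The paper's version is shorter on the page because it outsources exactly this step to a citation.

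Both arguments give the same exponent $1/2$. Both also tacitly need the gradient bound for $u$ on all of $\ol{B_{3/2}}$, namely $C(n)K_0$, rather than only on $B_1$ as in \eqref{eqn.gradient-v-u}. You handle this by enlarging $K$, which is exactly what the paper implicitly does as well.
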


\begin{proof}
Let $h\in C^\infty(B_{3/2})\cap C^1(\ol{B_{3/2}})$ be the harmonic function
satisfying $h=v\,(=u)$ on $\p B_{3/2}$.
Then $\t v>0=\t h$ in $B_{3/2}$.
By the comparison principle, it follows that $v\leq h$ in $\ol{B_{3/2}}$.
Therefore
\[
0\leq v-u\leq h-u\quad\ \text{in}\ \ol{B_{3/2}}.
\]
By the global H\"{o}lder estimate for harmonic functions \cite[Proposition 4.12]{CC95},
we have
\[
[h]_{C^{1/2}(\ol{B_{3/2}})}
\leq C(n)\|u\|_{C^1(\p B_{3/2})}
\leq C(n)K.
\]
For any $x\in B_{3/2}$, let $x_0\in\p B_{3/2}$ satisfy $|x-x_0|=d_x$.
Since $h(x_0)=u(x_0)$, we have
\[
h(x)-u(x)
\leq [h]_{C^{1/2}(\ol{B_{3/2}})}d_x^{1/2}+u(x_0)-u(x)
\leq C(n)K d_x^{1/2}+Kd_x
\quad\text{for any}\ x\in B_{3/2}.
\]
This completes the proof.
\end{proof}

\begin{remark}
In the first arXiv version of this article,
\eqref{eqn.boundary-decay} was proved
by using the Poisson integral formula for the harmonic function $h$.
Alternatively, one may prove it by taking $d_x^{1/2}=|x-x_0|^{1/2}$
as a barrier function (cf. \cite[pp. 33--34]{QY26}).
We also note that \eqref{eqn.boundary-decay} is a direct consequence
of the global gradient estimate \cite[Theorem 3.4]{CW01} for $\s_2(D^2v)=1/4$.
\end{remark}

\subsection{Proof of Theorem \ref{thm.smooth}}

By symmetry, we may assume that
$u$ lies on the positive branch $\t u>0$ in $B_2$.
Indeed, since $(\t u)^2=|D^2u|^2+2>0$ and $\t u$ is continuous,
either $\t u>0$ or $\t u<0$ in $B_2$.
Moreover, $\s_2(D^2(-u))=\s_2(D^2u)=1$, so the negative branch
is reduced to the positive one by replacing $u$ with $-u$.

Choose $d_0=d_0(n,K)\in(0,1/4)$ sufficiently small
such that $C(n,K)d_0^{1/2}\leq\ve_1/4$,
where $\ve_1=\ve_1(n,K)$ comes from Lemma \ref{lem.separation},
and $C(n,K)$ comes from Lemma \ref{lem.boundary-decay}.
Let $\d:=\d(n,K)=\ve_1/2$.
By Lemma \ref{lem.separation},
$v-u\geq\ve_1>\d$ in $B_{1/2}$,
Hence $B_{1/2}$ is contained in a connected component of $\set{v-u>\d}$,
which we denote by $\Om$.
For any $x\in B_{3/2}$,
if $d_x\leq d_0$, then $|x|=\frac32-d_x\geq\frac32-d_0>\frac54$.
Hence it follows from Lemma \ref{lem.boundary-decay} that
\begin{equation}
(v-u)(x)\leq\frac{\ve_1}{4},
\quad
\text{whenever}\ d_x\leq d_0.
\end{equation}
Thus, the set $\set{v-u>\d}$ cannot intersect $\set{d_x\leq d_0}$.
Therefore,
$B_{1/2}\subset\Om\Subset B_{3/2-d_0}$,
and hence that $B_{d_0/2}(x)\subset B_{3/2}$ for any $x\in\Om$.
Set $w:=v-\d$ in $B_{3/2}$.
Then $w$ is a $2$-convex function in $B_{3/2}$.
By the definition of $\Om$,
\[
w>u\quad\text{in}\ \Om,\
\quad w=u\quad\text{on}\ \p\Om.
\]
Applying the gradient estimate (\cite{Tru97,CW01}, Lemma \ref{lem.gradient-estimate})
to $\s_2(D^2v)=1/4$ in $B_{d_0/2}(x)$, we get
\[
|Dv(x)|
\leq C(n)d_0^{-1}\norm{v}_{L^\infty(B_{d_0/2}(x))}
\leq C(n,K).
\]
Thus
\[
\norm{Dw}_{L^\infty(\Om)}=\norm{Dv}_{L^\infty(\Om)}\leq C(n,K).
\]
Applying Pogorelov estimate (\cite{CW01}, Lemma \ref{lem.Pogorelov-estimate})
to $u$ and $w$ in $\Om$, we obtain
\[
(w-u)^4|D^2u|\leq C(n,K)\quad\text{in}\ \Om.
\]
Since, by Lemma \ref{lem.separation},
\[
w-u
=v-u-\d
\geq\ve_1-\d
=\d
\quad
\text{in}\ B_{1/2},
\]
we obtain
\[
\sup_{B_{1/2}}|D^2u|
\leq C(n,K)\sup_{B_{1/2}}(w-u)^{-4}
\leq C(n,K)\d^{-4}
\leq C(n,K_0).
\]
This completes the proof of Theorem \ref{thm.smooth}.
\hfill{$\qed$}

\begin{remark}\label{rmk.sigma-2=f}
The argument of Theorem \ref{thm.smooth} extends to the equation
\begin{equation}\label{eqn.sigma-2=f}
\s_2(D^2u)=f(x,u,Du)\quad\text{in}\ B_2,
\end{equation}
where $f\in C^{1,1}(B_2\times\R\times\R^n)$ and $f>0$.
After the normalization
\[
f_0:=\norm{1/f}_{L^\infty}^{-1},\quad
\wt u:=f_0^{-1/2}u,
\quad
\wt f(x,z,p):=
f_0^{-1}f(x,f_0^{1/2}z,f_0^{1/2}p),
\]
we may assume that
\[
f(x,u,Du)\geq1\quad\text{in}\ B_2\times\R\times\R^n.
\]
The interior gradient estimate of Chou--Wang
\cite[Theorem~3.2]{CW01} gives
\[
\norm{Du}_{L^\infty(B_{3/2})}
\leq
C\frb{n,\norm{u}_{L^\infty(B_2)},\norm{f}_{C^{0,1}(B_2\times\R\times\R^n)}}.
\]
With this estimate,
all arguments leading to the construction
of the comparison function $w=v-\d$
and of the comparison domain $\Om$
use only the conditions
\[
\s_2(D^2u)\geq1,
\quad
\t u>0
\quad
\text{in}\ B_2.
\]
Indeed, for the solution $v$ of the Dirichlet problem \eqref{eqn.v},
the comparison principle yields $u\leq v$ in $\ol{B_{3/2}}$.
Moreover,
the inequality $\s_2(D^2v)-\s_2(D^2u)\leq-\frac34$
leads to the seed separation lemma (Lemma \ref{lem.seed}),
while the concavity of $\s_2^{1/2}$ implies $L_v(v-u)\leq-1/2$
(see \eqref{eqn.L-v-u<-1/2}).
These are the only properties of $u$ needed
in the remaining argument of Section \ref{sec.proof-of-the-Thm},
so the same argument applies.
Furthermore, the corresponding Pogorelov estimate for equation \eqref{eqn.sigma-2=f}
follows from a minor modification of the argument of
Li--Ren--Wang \cite[Theorem 1]{LRW16},
by considering the auxiliary function
\[
\Phi(x,\xi)
:=(w-u)^\a\exp\set{\frac{\ve}{2}|Du|^2+\frac{a}{2}|x|^2}u_{\xi\xi}
\quad\text{for any}\ (x,\xi)\in\ol\Om\times\mathbb S^{n-1};
\]
and also follows from \cite[Proposition 2.6]{CJTZ26}
when $f\in C^{0,1}(B_2\times\R)$.
\end{remark}

\subsection*{Acknowledgments}

We are deeply grateful to Professor Yu Yuan for introducing us to this problem
and for teaching us many of the fundamental techniques of this subject.
We also thank Professors Yu Yuan and Connor R. Mooney for their helpful comments.
This work was partially supported by NSFC 12171389 and NSFC 11801015.

\renewcommand\refname{References}

\end{document}